\def\NAT@def@citea{\def\@citea{\NAT@separator}}
\theoremstyle{plain}
\newtheorem{theorem}{Theorem}[section]
\newtheorem{lemma}[theorem]{Lemma}
\newtheorem{proposition}[theorem]{Proposition}
\theoremstyle{definition}
\newtheorem{example}[theorem]{Example}
\theoremstyle{remark}
\newtheorem{remark}{Remark}
\begin{document}


\title{Minimal convex majorants of functions and \\ Demyanov--Rubinov super(sub)differentials}

\author{
\name{Valentin~V. Gorokhovik\thanks{Email: gorokh@im.bas-net.by}}\thanks{ID: https://orcid.org/0000-0003-2447-5943}
\affil{Institute of Mathematics,
The National Academy of Sciences of Belarus, \\ Minsk, Belarus}
}

\maketitle

\begin{abstract}
The primary goal of the paper is to establish characteristic properties of (extended) real-valued functions defined on normed vector spaces that admit the representation as the lower envelope of their minimal (with respect of the pointwise ordering) convex majorants. The results presented in the paper generalize and extend the well-known Demyanov-Rubinov characterization of upper semicontinuous positively homogeneous functions as the lower envelope of exhaustive families of continuous sublinear functions to more larger classes of (not necessarily positively homogeneous) functions defined on arbitrary normed spaces. As applications of the above results, we introduce, for nonsmooth functions, a new notion of the Demyanov—Rubinov subdifferential at a given point, and show that it generalizes a number of known notions of subdifferentiability, in particular, the Fenchel-Moreau subdifferential of convex functions and the Dini-Hadamard (directional) subdifferential of directionally differentiable functions. Some applications of Demyanov-Rubinov subdifferentials to extremal problems are considered.
\end{abstract}

\begin{keywords}
semicontinuous functions; upper and lower envelopes; convex majorants; positively homogeneous functions; subdifferential
\end{keywords}

\begin{amscode}
49J52, 49K27, 26B40
\end{amscode}

\section{Introduction}

The known classical result (see, for instance, \cite{Bourb}) states that a function defined on a metric space is lower (upper) semicontinuous if and only if it can be represented as the upper (lower) envelope of a family of continuous functions. On the other hand, it is also well-known \cite{ET76}, that each lower semicontinuous convex function defined on a normed vector space is the upper envelope of a family of continuous affine function. The latter statement plays a crucial role in establishing duality results in convex analysis and optimization. Besides, it shows that particular classes of semicontinuous functions can be represented as the upper or lower envelope of families of elementary (in some sense) continuous functions. Studies in this direction have led to the development of various abstract theories of convexity \cite{KutRub72,KutRub,GRC,Rub99,Rub00,PalRol,Singer}.


In 1982 Demyanov and Rubinov \cite[Theorem 2.1]{DR82} (see also \cite[Lemma 4.3]{DR90} and \cite[Lemma 5.2]{DR95}) proved that a real-valued positively homogeneous (p.h.) function defined on a Hilbert space $X$ is lower semicontinuous on $X$ if and only if it can be represented as the upper envelope of a family of real-valued continuous superlinear functions. Symmetrically, a real-valued p.h. function defined on a Hilbert space $X$ is upper semicontinuous on $X$ if and only if it can be represented as the lower envelope of a family of real-valued continuous sublinear functions. In 2000 Uderzo \cite{Ud2000} extended the above characterizations of semicontinuous p.h. functions to those defined on uniformly convex Banach spaces. At last, in 2017 Gorokhovik \cite{Gor2017a} proved that each upper semicontinuous p.h. function defined on an arbitrary normed vector space can be represented as the lower envelope of a family of continuous sublinear functions.
A family of continuous sublinear (superlinear) functions whose lower (upper) envelope is equal to a given p.h. function $p:X \to {\mathbb R}$ was called \cite{DR82,DR90,DR95} \textit{an exhaustive family of upper convex (lower concave) approximations} of $p.$ The characteristic properties of exhaustive families of upper convex (lower concave) approximations of various classes of positively homogeneous functions defined on finite-dimensional spaces are recorded in \cite{GS2011}; see also \cite{GorTr2016}.

Using exhaustive families of upper convex and lower concave approximations and the classical Minkowski dualty, Demyanov \cite{Dem99,DR2000} introduced dual objects for upper  and lower semicontinuous p.h. functions, called upper and lower exhausters, respectively. In fact, the correspondence between p.h. functions and their exhausters extends the classical Minkowski duality to the class of lower semicontinuous p.h. functions that is essentially more larger then those of sublinear or even difference-sublinear functions. The drawback of this extension is a multiplicity of exhaustive families of upper convex (lower concave) approximations and upper (lower) exhausters corresponding to the same function.

The main results of the present paper concern extensions of the Demyanov-Rubinov characterization of upper (lower) semicontinuous p.h. functions as the lower envelopes of continuous sublinear (superlinear) functions to (not necessarily p.h.) functions defined on an arbitrary normed vector spaces. To realize these extensions we enlarge the class of `elementary' functions used as upper approximations from continuous sublinear ones to convex ones.

We mainly discuss lower envelope presentations of functions. Passing from$f$ to $-f,$ all results can be symmetrically reformulated for upper envelope presentations. However, there are issues in variational analysis when we have to use both the lower envelope presentation and the upper envelope one simultaneously. For this reason some important results after their proof for lower envelope presentations are reformulated (without a proof) for upper envelope ones.    

The paper is organized as follows.

In Section \ref{sec1} we present some preliminaries results concerning sets in real vector spaces. For such sets we introduce the concept of a convex component, by which we mean a maximal (with respect to inclusion) convex subset of a given set. The family of convex components of a set is a covering of this set and, in this sense, it relates with an exhaustive family of upper convex approximations of a function. We also prove that the recession cone of a set agrees with the intersection of the recession cones of all its convex components.
The counterpart of the concept of a convex component is the one of a convex complement of a set.

In Section \ref{sec2} we associate with each function $f:X \to \overline{\mathbb R}$ ($\overline{\mathbb R}:= {\mathbb R}\bigcup\{\pm\infty\}$ is the extended real line) defined on a real vector space $X$ the family $\Sigma^+(f)$ consisting of all minimal (in the sense of the pointwise ordering) convex majorants of $f.$  The notion of a minimal convex majorant of a function is closely related to the notion of a convex component of a set: a function $g: X \to \overline{\mathbb R}$ is a minimal convex majorant of  a function $f:X \to \overline{\mathbb R}$  if and only if the epigraph of $g$ is a convex component of the epigraph of $f.$ The family $\Sigma^+(f)$ is nonempty for any $l$-proper function $f:X \to \overline{\mathbb R}$ ($f$ is $l$-proper if $f \not \equiv +\infty$ and $f(x) > -\infty$ for all $x \in X$) and, moreover, in this case $f$ is the pointwise minimum of $\Sigma^+(f).$

In general, including the case, when $f$ is a real-valued function, among minimal convex majorants of $f$ can be such that take the value $+\infty$. However, as it is proved in Section \ref{sec3}, if a real-valued function $f:X \to {\mathbb R}$ defined on a normed vector space is Lipschitz continuous on the whole space $X,$ each minimal convex majorant of $f$ is Lipschitz continuous on $X$ as well and, consequently, does not take the value $+\infty$.

For each real-valued function $f:X \to {\mathbb R}$ defined on a real normed space $X$ we denote by the symbol $\Sigma^+_{Lip}(f)$ the subfamily of $\Sigma^+(f)$ consisting of all minimal convex majorants of $f$ that are Lipschitz continuous on $X.$ For Lipschitzian functions (Theorem \ref{th4.1}) $\Sigma^+_{Lip}(f)$ agrees with $\Sigma^+(f).$  In general case  (see Theorem~\ref{th4.4}) the subfamily $\Sigma^+_{Lip}(f)$ is nonempty if and only if $f$ is Lipschitz bounded from above, that is, if and only if $f$ is majorized by a function that is Lipschitz continuous on $X.$ The most significant property of $\Sigma^+_{Lip}(f)$ is that the lower envelope of $\Sigma^+_{Lip}(f)$ is the upper semicontinuous closure of the function $f.$ From the above observations we conclude (Theorem~\ref{th4.7}) that a function $f$ is upper semicontinuous and Lipschitz bounded from above
on $X$ if and only if the family $\Sigma^+_{Lip}(f)$ is nonempty and $f$ is the lower envelope of $\Sigma^+_{Lip}(f),$ that is,
$
f(x) = \inf\limits_{g \in \Sigma^+_{Lip}(f)} g(x)\,\,\text{for all}\,\,x \in X.
$

For p.h. functions the requirement of Lipschitz boundedness from above in the last statement can be omitted, since each upper semicontinuous p.h. function is bounded from above by the Lipschitz continuous function $x \to k\|x\|$ with a suitable number $k > 0.$
Observe also, that each minimal convex majorant of a p.h. function is sublinear. Thus, since each continuous sublinear function $\varphi:X \to {\mathbb R}$ is Lipschitz continuous, the family $\Sigma^+_{Lip}(p)$ corresponding to a p.h. function $p: X \to {\mathbb R}$ consists exclusively of continuous sublinear functions.
To emphasize these peculiarities  we denote the family of all minimal continuous sublinear majorants of a p.h. function $p: X \to {\mathbb R}$ by the symbol $S^+_C(p)$ instead of $\Sigma^+_{Lip}(p).$

Taking into account the above remarks, we get from Theorem~\ref{th4.7} the following characterization of upper semicontinuous p.h. functions: a p.h. function $p$ defined on a normed vector space $X$ is upper semicontinuous on $X$ if and only if the family
$S^+_{C}(p)$ of minimal continuous sublinear majorants of $p$ is nonempty and $p$ is the lower envelope of $S^+_{C}(p)$, or, in the Demyanov-Rubinov terminology, $S^+_{C}(p)$ is an exhaustive family of upper convex approximations of $p.$ This statement extends the Demyanov-Rubinov characterization of upper semicontinuous p.h. functions to arbitrary normed space settings. 

In the concluding section \ref{sec4} of the paper we introduce, applying the above characterizations of p.h functions to directional derivatives, a new notion of subdifferentiability (superdifferentiability) of an extended-real-valued function at a given point called the Demyanov-Rubinov subdifferential (superdifferential). For convex functions the Demyanov-Rubinov subdifferential coincides with the classical Fenchel--Moreau subdifferential in the sense of convex analysis \cite{ET76,Rock,HULem1,HULem2}. For nonconvex functions the Demyanov-Rubinov subdifferential contains the Dini--Hadamard (directional) subdifferential \cite{Penot1,Penot2,Ioffe17} as a (possibly empty) subset. Observe also, that a function is G\^{a}teaux differentiable \cite{BonSha} at some point if and only if both its Demyanov-Rubinov subdifferential and its Demyanov-Rubinov superdifferential at this point are the same one-element family consisting only of the G\^{a}teaux derivative. Moreover, both the Demyanov-Rubinov subdifferential and the Demyanov-Rubinov superdifferential contain continuous linear functions if and only if a function is G\^{a}teaux differentiable.

Some applications of Demyanov-Rubinov subdifferentials to extremal problems are considered.

The results presented here were partially announced in \cite{Gor2017a,Gor2017b}.

\section{Convex components and convex complements of a set}\label{sec1}

Let $X$ be a real vector space.

By \textit{a convex component} of a nonempty set $Q$ in $X$ we call a maximal (in the sense of inclusion) nonempty convex subset of $Q.$

The existence of convex components for an arbitrary nonempty set $Q$ follows from Zorn's lemma \cite{Kelley}. Indeed, since any one-point subset of $X$ is convex, the collection of convex subsets of $Q$ is nonempty. Besides, for any chain of convex subsets of $Q$ ordered by inclusion the union of its subsets also is a convex subset of $Q.$ Hence, the collection of convex subsets of $Q$ is inductively ordered by inclusion. Due to Zorn's lemma the family of maximal convex subsets (convex components) of the set $Q$ is nonempty and, moreover, for each convex subset of $Q$ there exists a maximal convex subset (a convex component) which contains it.

The family of all convex components of a set $Q$ will be denoted by $\sigma^+(Q).$

The next theorem summarizes the above observations.

\begin{theorem}\label{th1.1}
The family of convex components $\sigma^+(Q)$ of an arbitrary nonempty set $Q \subset X$ is nonempty and, moreover, for any convex subset $S$ of $Q$ there exists a convex component $C \subset \sigma^+(Q)$ such that $S \subset C.$

In addition, the family $\sigma^+(Q)$ of all convex components of a set $Q$ is a covering of $Q,$ i.e., the equality
\vspace{-4mm}
\begin{equation}\label{e1.1}
    Q = \bigcup\{C\,|\,C \in \sigma^+(Q)\}
\end{equation}
\vspace{-4mm}
holds.
\end{theorem}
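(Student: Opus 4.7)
The plan is a direct Zorn's-lemma argument, following the intuition already sketched in the paragraph preceding the theorem. The only genuinely substantive ingredient is the closure of convex subsets of $Q$ under unions of chains; once that is in hand, both assertions fall out in a few lines.

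First I would prove the strengthened form of part one: given any nonempty convex subset $S \subset Q$, there exists a convex component $C \in \sigma^+(Q)$ with $S \subset C$. For this I consider the family $\mathcal{F}_S := \{T \subset Q \mid T \text{ convex},\ S \subset T\}$, partially ordered by inclusion. It is nonempty because $S \in \mathcal{F}_S$. The key verification is that every chain $\{T_\alpha\}_{\alpha \in A}$ in $\mathcal{F}_S$ admits an upper bound in $\mathcal{F}_S$, namely $T^\ast := \bigcup_{\alpha \in A} T_\alpha$. Clearly $S \subset T^\ast \subset Q$, so only the convexity of $T^\ast$ requires argument: for $x, y \in T^\ast$ and $\lambda \in [0,1]$, one picks $\alpha, \beta \in A$ with $x \in T_\alpha$, $y \in T_\beta$; since the chain is totally ordered, one of these contains the other, say $T_\alpha \subset T_\beta$, so both $x$ and $y$ lie in $T_\beta$, and convexity of $T_\beta$ gives $\lambda x + (1-\lambda)y \in T_\beta \subset T^\ast$. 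Zorn's lemma then yields a maximal element $C \in \mathcal{F}_S$, and this $C$ is by construction a maximal convex subset of $Q$ containing $S$, i.e.\ a convex component with $S \subset C$.

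Next, to obtain the unqualified nonemptiness of $\sigma^+(Q)$ stated in part one, I simply apply the above with $S = \{x_0\}$ for any chosen $x_0 \in Q$ (singletons being trivially convex). This shows $\sigma^+(Q) \neq \emptyset$.

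Finally, the covering assertion \eqref{e1.1} follows immediately. The inclusion $\bigcup\{C \mid C \in \sigma^+(Q)\} \subset Q$ is trivial because each convex component is a subset of $Q$. For the reverse inclusion, fix any $x \in Q$; applying the strengthened part one with $S = \{x\}$ produces a convex component $C \in \sigma^+(Q)$ containing $x$, whence $x \in \bigcup\{C \mid C \in \sigma^+(Q)\}$. There is no real obstacle here — the only point that could trip one up is the convexity of $T^\ast$, where it is essential to use that the family $\{T_\alpha\}$ is a chain rather than an arbitrary collection of convex subsets of $Q$.
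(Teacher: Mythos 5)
Your argument is correct and coincides with the paper's own proof: a Zorn's-lemma application to the inclusion-ordered family of convex subsets of $Q$ containing a given convex $S$, with the union-of-a-chain convexity check, followed by the singleton observation for nonemptiness and the covering equality. No differences worth noting.
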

\vspace{1mm}
The equality \eqref{e1.1} follows from the fact that each one-point subset of $Q$ is convex and, consequently, is contained in some convex component of $Q.$

\begin{remark}\label{rem1.1}
{\rm It is easily seen that, when a set $Q$ is a cone (this means that $\lambda x \in Q$ for all $x \in Q$ and $\lambda > 0),$ any its convex component is also a cone.}
\end{remark}

\begin{remark}\label{rem1.2}
{\rm If $X$ is a Hausdorff topological vector space and $Q$ is a closed subset of $X,$ then any convex component of $Q$ is closed as well.}
\end{remark}

To the best of my knowledge for the first time the families of maximal convex subsets were used for global analysis of sets by Valentine in his 1930 monograph \cite{Valent}. The equality \eqref{e1.1} was established by Smith C.R. in the small note \cite{Smith}. The term `convex component' was introduced by Gorokhovik and Zorko in the paper \cite{GorZor} devoted to nonconvex polyhedral sets.

By \textit{a convex complement} of a set $Q \subset X$ we call a maximal (in the sense of inclusion) convex subsets $D \subset X$ such that $D\bigcap Q = \varnothing.$

It is evident that the notion of a convex complement is related to the notion of a convex  component: any convex complement of $Q$ is a convex component of $X \setminus Q$ and vice versa.

The family of all convex complements of a set $Q$ will be denoted by $\sigma^-(Q).$

\begin{theorem}\label{th1.2}
The family of convex complements $\sigma^-(Q)$ of an arbitrary set $Q \subset X$ with $X \setminus Q \ne \varnothing$ is nonempty and, moreover,
\begin{equation}\label{e1.2}
    Q = \bigcap\{X \setminus D\,|\,D \in \sigma^-(Q)\}.
\end{equation}
Besides, for any convex subset $S \subset X \setminus Q$  there exists a convex complement $D \in \sigma^-(Q)$ such that $S \subset D.$
\end{theorem}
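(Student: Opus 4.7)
The plan is to reduce Theorem~\ref{th1.2} entirely to Theorem~\ref{th1.1} applied to the complementary set $X \setminus Q,$ exploiting the duality already flagged in the paper: a subset $D \subset X$ is a convex complement of $Q$ if and only if it is a convex component of $X \setminus Q.$ Indeed, the condition $D \cap Q = \varnothing$ is just $D \subset X \setminus Q,$ and maximality of a convex subset of $X$ disjoint from $Q$ is the same as maximality of a convex subset of $X \setminus Q.$ Hence $\sigma^{-}(Q) = \sigma^{+}(X \setminus Q).$

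First I would invoke Theorem~\ref{th1.1} with the set $X \setminus Q,$ which is nonempty by hypothesis. That yields $\sigma^{+}(X\setminus Q)\neq \varnothing,$ and therefore $\sigma^{-}(Q)\neq\varnothing.$ The same theorem gives the covering identity
\[
  X \setminus Q \;=\; \bigcup\{D \mid D \in \sigma^{+}(X\setminus Q)\}
              \;=\; \bigcup\{D \mid D \in \sigma^{-}(Q)\}.
\]
Taking complements on both sides in $X$ and using De~Morgan's law,
\[
  Q \;=\; X \setminus \bigcup\{D \mid D \in \sigma^{-}(Q)\}
    \;=\; \bigcap\{X \setminus D \mid D \in \sigma^{-}(Q)\},
\]
which is exactly \eqref{e1.2}.

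For the final \emph{besides} clause, let $S \subset X \setminus Q$ be any convex subset. By the second part of Theorem~\ref{th1.1} applied to $X \setminus Q,$ there exists a convex component $D \in \sigma^{+}(X \setminus Q)$ with $S \subset D.$ Translating back through the identification $\sigma^{+}(X\setminus Q) = \sigma^{-}(Q),$ this $D$ is the desired convex complement of $Q$ containing $S.$

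There is no real obstacle here; the only thing to verify carefully is the definitional equivalence $\sigma^{-}(Q) = \sigma^{+}(X\setminus Q),$ after which non-emptiness, the covering/intersection formula, and the extension property all transfer mechanically from Theorem~\ref{th1.1}. In particular, no further appeal to Zorn's lemma is needed beyond the one already made in the proof of Theorem~\ref{th1.1}.
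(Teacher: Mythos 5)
Your proposal is correct and follows exactly the route the paper intends: the paper omits an explicit proof of Theorem~\ref{th1.2}, relying on the preceding remark that convex complements of $Q$ are precisely the convex components of $X \setminus Q$, so everything reduces to Theorem~\ref{th1.1} applied to $X \setminus Q$ together with De~Morgan's law. You have simply written out the details of that reduction, and they are all in order.
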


We demonstrate next that the recession cone of a set coincides with the intersection of the recession cones of all convex components of this set.

Recall \cite{Rock} that a vector $y \in X$ defines a recession direction for a subset $Q \subset X$ if $x + ty \in Q$ for all $x \in Q$ and all $t \in [0,+\infty).$

The collection of all vectors which define recession directions for a set $Q$ will be denoted by $Q^\infty.$ It is easy to check that $Q^\infty$ is a convex cone with $0_X \in Q^\infty.$ Besides, for any proper subset $Q \subset X$ the equality  $Q^{\infty} = -\ (X \setminus Q)^{\infty}$ holds. Extending this equality to improper subsets we assume that ${\varnothing}^{\infty} = X$ and $X^{\infty} = {\varnothing}.$

\begin{theorem}\label{th1.3}
For any nonempty subset $Q$ of a real vector space $X$ the equalities $$Q^{\infty} = \bigcap\{\ S^{\infty}\,|\, S \in \sigma^+(Q)\ \} = -\bigcap\{\ C^{\infty}\,|\, C \in \sigma^-(Q)\ \}$$
hold.
\end{theorem}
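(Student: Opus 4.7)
The plan is to prove the two equalities in turn, leveraging the maximality of convex components (resp.\ complements) and the covering property from Theorem~\ref{th1.1}. The first equality $Q^\infty = \bigcap\{S^\infty \mid S \in \sigma^+(Q)\}$ is the core of the argument; the second then follows almost formally via passage to $X \setminus Q$.

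For the inclusion $Q^\infty \subseteq \bigcap\{S^\infty \mid S \in \sigma^+(Q)\}$, I would fix $y \in Q^\infty$ and an arbitrary $S \in \sigma^+(Q)$, and consider the set $T := S + [0,+\infty)y$. A direct check shows $T$ is convex (as the sum of two convex sets), contains $S$, and lies in $Q$ because $y$ is a recession direction of $Q$ and $S \subset Q$. By maximality of $S$ among convex subsets of $Q$, we must have $T = S$, which is exactly the statement $y \in S^\infty$. For the reverse inclusion, I would take $y$ in every $S^\infty$ and any $x \in Q$; by the covering equality \eqref{e1.1} there exists $S \in \sigma^+(Q)$ with $x \in S$, and then $x + ty \in S \subset Q$ for all $t \geq 0$, so $y \in Q^\infty$.

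The second equality reduces to the first by the dualization formula $Q^{\infty} = -(X \setminus Q)^{\infty}$ recalled in the paragraph preceding the theorem, together with the observation (made right after the definition of convex complement) that $\sigma^-(Q) = \sigma^+(X \setminus Q)$. Applying the first equality with $Q$ replaced by $X \setminus Q$ gives
\[
(X \setminus Q)^{\infty} \;=\; \bigcap\bigl\{C^{\infty} \,\big|\, C \in \sigma^+(X \setminus Q)\bigr\} \;=\; \bigcap\bigl\{C^{\infty} \,\big|\, C \in \sigma^-(Q)\bigr\},
\]
and negating both sides yields the desired identity. The extended conventions $\varnothing^{\infty} = X$ and $X^{\infty} = \varnothing$ handle the degenerate cases $Q = X$ (where $\sigma^-(Q) = \varnothing$) and $X \setminus Q$ convex.

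I do not expect a serious obstacle here: the only non-trivial step is the convexity/maximality argument in the first inclusion, and that is a one-line verification once one writes down $T = S + [0,+\infty)y$. The rest is bookkeeping and invocation of Theorem~\ref{th1.1} and the stated complementation formula.
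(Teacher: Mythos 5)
Your proposal is correct and follows essentially the same route as the paper: the set $T = S + [0,+\infty)y$ is exactly the set $S_1$ used in the paper's maximality argument, the reverse inclusion via the covering equality \eqref{e1.1} is identical, and the second equality is obtained in both cases from $Q^{\infty} = -(X\setminus Q)^{\infty}$ together with $\sigma^-(Q)=\sigma^+(X\setminus Q)$. No gaps.
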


\begin{proof} Let $y \in Q^{\infty}$ and $S \in \sigma^+(Q).$ The set
$S_1 := \{z = x + ty \ |\ x \in S, t \in [0,+\infty) \}$ is a convex subset of $Q$ and, in addition, $S \subset S_1.$ Since $S$ is a maximal convex subset of $Q,$ we conclude that $S = S_1$ and, hence, $x + ty
\in S$ for all $x \in S$ and all $t \in [0,+\infty).$ It proves that $y \in S^{\infty}$ for all $S \in \sigma^+(Q).$

Conversely, if $y \in \bigcap\{\ S^{\infty} \ \mid \ S
\in \sigma^+(Q)\ \},$ it follows from the equality $Q = \bigcup\{\ S \ \mid \ S \in \sigma^+(Q)\ \},$ that $x + ty \in Q$ for all $x
\in Q$ and all $t \in [0,+\infty),$ i.e., $y \in Q^{\infty}.$ Thus, the proof of the first equality $Q^{\infty} = \bigcap\{\ S^{\infty}\,|\, S \in \sigma^+(Q)\ \}$ is complete.

Then the second equality is immediate from the equality $Q^{\infty} = -\ (X \setminus Q)^{\infty}$.
\end{proof}

Before compliting this section, we recall \cite{Rock,HULem1} that when $Q$ is a closed convex subset of a Hausdorff topological vector space, $h \in Q^\infty$ if and only if for some fixed point $\bar{x} \in Q$ one has $\bar{x} + th \in Q$ for all $t \ge 0.$

\section{Minimal convex majorants and maximal concave minorants of functions} \label{sec2}

Let $X$ be a real vector space, and $f:X \to \overline{{\mathbb R}}$ a function, defined on $X$ and taking values in the extended real line $\overline{{\mathbb R}}:={\mathbb R}\cup\{\pm\infty\}.$

The sets
${\rm epi\hspace{2pt}}f := \{(x,\,\alpha) \in X \times {\mathbb R}\,|\, f(x) \le \alpha\}$ and ${\rm hypo\hspace{2pt}}f := \{(x,\,\alpha) \in X \times {\mathbb R}\,|\, f(x) \ge \alpha\}$ are called, respectively,  \textit{the epigraph} and \textit{the hypograph} of the function~$f;$ the set ${\rm dom}f := \{x \in X \mid |f(x)| < +\infty\}$ is called \textit{the effective domain} of $f.$

A function $f:X \to \overline{{\mathbb R}}$ will be called \textit{$l$-proper}, if $f(x) > -\infty$ for all $x \in X$ and its epigraph ${\rm epi}\hspace{2pt}f$
is a nonempty subset of $X\times {\mathbb R}.$   In the case when $f(x) < +\infty$ for all $x \in X$ and the hypograph ${\rm hypo}\hspace{2pt}f$ of $f$ is a nonempty subset of $X\times {\mathbb R},$  the function $f:X \to \overline{{\mathbb R}}$ will be called \textit{$u$-proper}.

A function $g:X \to {\overline{\mathbb R}}$ is called \textit{convex}, if it is $l$-proper and
$$
g(\lambda x +(1-\lambda)y)\le \lambda g(x) + (1-\lambda)g(y)\,\,\text{for all}\,\,x,\,y \in X\,\,\text{and all}\,\,\lambda \in [0,\,1],
$$
or, equivalently, $g$ is convex, if $g(x) > -\infty$ for all $x \in X$  and its epigraph ${\rm epi}\hspace{2pt}g$  is a nonempty convex subset of $X\times {\mathbb R}.$

A function $h:X \to {\overline{\mathbb R}}$ is called \textit{concave}, if $-h$ is a convex function, or, equivalently, if $h$ is $u$-proper and its hypograph ${\rm hypo}\hspace{2pt}h$
is a nonempty convex subset of $X\times {\mathbb R}.$

\begin{lemma}\label{l3.1}
Any convex subset $G \subset X \times {\mathbb R}$ such that for every point $x \in X$ the subset of reals $\{\gamma \in {\mathbb R}\,|\,(x,\,\gamma) \in G\}$ is either the empty set or a nonempty infinite half-interval bounded from below and unbounded from above defines on $X$ the convex function $g_G:x \to g_G(x):= \inf\{\gamma \in {\mathbb R}\,|\,(x,\,\gamma) \in G\}$ $($under the convention $\inf\varnothing = +\infty).$
\end{lemma}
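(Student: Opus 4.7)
The plan is to verify the two defining properties of a convex function as stipulated by the paper: (i) $g_G(x) > -\infty$ for every $x \in X$, and (ii) $\mathrm{epi}\,g_G$ is a nonempty convex subset of $X\times\mathbb{R}$. Property (i) is essentially immediate from the hypothesis: for $x$ such that $\{\gamma\in\mathbb{R}\mid(x,\gamma)\in G\}=\varnothing$ we have $g_G(x)=+\infty$ by convention, while for all other $x$ the set in question is a half-interval bounded from below, so its infimum lies in $\mathbb{R}$. Nonemptiness of $\mathrm{epi}\,g_G$ follows from any point $(x,\gamma)\in G$, which satisfies $g_G(x)\le\gamma$ and hence sits in $\mathrm{epi}\,g_G$ (tacitly assuming $G\neq\varnothing$, since otherwise the conclusion is vacuous or $g_G\equiv+\infty$ would not be $l$-proper).

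The substantive step is convexity of $\mathrm{epi}\,g_G$. Given $(x_1,\alpha_1),(x_2,\alpha_2)\in\mathrm{epi}\,g_G$ and $\lambda\in[0,1]$, I would fix an arbitrary $\varepsilon>0$ and invoke the definition of infimum to select $\gamma_i\in\mathbb{R}$ with $(x_i,\gamma_i)\in G$ and $\gamma_i\le\alpha_i+\varepsilon$ (this is possible because $g_G(x_i)\le\alpha_i<+\infty$, so the half-interval at $x_i$ is nonempty). Convexity of $G$ then yields $\bigl(\lambda x_1+(1-\lambda)x_2,\ \lambda\gamma_1+(1-\lambda)\gamma_2\bigr)\in G$, whence
\[
g_G(\lambda x_1+(1-\lambda)x_2)\le\lambda\gamma_1+(1-\lambda)\gamma_2\le\lambda\alpha_1+(1-\lambda)\alpha_2+\varepsilon.
\]
Letting $\varepsilon\downarrow 0$ gives $(\lambda x_1+(1-\lambda)x_2,\,\lambda\alpha_1+(1-\lambda)\alpha_2)\in\mathrm{epi}\,g_G$, which is the desired convexity.

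The main (minor) subtlety I anticipate is precisely the reason the approximation by $\varepsilon$ is needed: the hypothesis allows the half-interval $\{\gamma:(x,\gamma)\in G\}$ to be of the form $(g_G(x),+\infty)$, in which case the point $(x,g_G(x))$ itself need not belong to $G$. So one cannot simply argue that $(x_i,g_G(x_i))\in G$ and invoke convexity of $G$ directly; the $\varepsilon$-slack above circumvents this and also makes the argument work uniformly whether the infimum at $x_i$ is attained or not. No topology on $X$ is used, so the proof goes through for an arbitrary real vector space.
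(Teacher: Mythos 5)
Your proof is correct and follows essentially the same route as the paper: the paper also verifies $l$-properness directly and establishes convexity of $\mathrm{epi}\,g_G$ by perturbing the second coordinates upward by a vanishing slack (it uses $1/n$ where you use $\varepsilon$), applying convexity of $G$, and passing to the limit. The subtlety you flag --- that $(x,g_G(x))$ need not lie in $G$ because the half-interval may be open at its lower end --- is exactly the point the paper's $1/n$ device is designed to handle.
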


\proof  It follows immediately from the properties of the set $G$ that the function $g_G$ is $l$-proper. Now, we need to prove that its epigraph ${\rm epi}g_G$ is a convex subset.

Let $(x_1,\gamma_1), (x_2,\gamma_2) \in {\rm epi}g_G.$ Then, for any natural number $n \in {\mathbb N}$ the points $\left(x_1,\gamma_1+ \displaystyle\frac{1}{n}\right),$ $\left(x_2,\gamma_2 + \displaystyle\frac{1}{n}\right)$ also belong to the set $G.$
It implies through the convexity of $G$ that for all $\lambda \in [0,1]$ one has $$\left(\lambda x_1+ (1-\lambda)x_2,\lambda \gamma_1 + (1-\lambda)\gamma_2 +  \displaystyle\frac{1}{n}\right) \in G\,\,\forall\,\,n \in {\mathbb N}$$ and
consequently $$g_G(\lambda x_1+ (1-\lambda)x_2) \le \lambda \gamma_1 + (1-\lambda)\gamma_2 + \displaystyle\frac{1}{n}\,\,\forall\,\,n \in {\mathbb N}.$$ Passing to the limit in the latter inequality as $n \to \infty,$  we get $g_G(\lambda x_1+ (1-\lambda)x_2) \le \lambda \gamma_1 + (1-\lambda)\gamma_2,$ which implies that $\lambda (x_1,\gamma_1)+ (1-\lambda)(x_2,\gamma_2) \in  {\rm epi}g_G.$

Thus, the epigraph of the function $g_G$ is a convex set and consequently the function $g_G$ is convex. \hfill $\square$

Note that in general the epigraph of the function $g_G$ does not coincide with the set $G$ because $G$ may not contain some points of the graph of this function, i.~e., some points of the set $\{(x,\,g_G(x))\,|\,x \in {\rm dom}g_G\}.$ However, the equality ${\rm epi}g_G = G \cup \{(x,\,g_G(x))\,|\,x \in {\rm dom}g_G\}$ holds.

A (convex) function $\phi:X \to~{\overline{\mathbb R}}$ is called \textit{a (convex) majorant} of a function \linebreak $f:X \to~{\overline{\mathbb R}},$ if ${\rm epi}\hspace{2pt}\phi \subset {\rm epi}\hspace{2pt}f$ or, equivalently, if $f(x) \le \phi(x)$ for all $x \in X.$

{\it By a minimal convex majorant} of a function $f:X \to {\overline{\mathbb R}}$ we call such its convex majorant $g:X \to {\overline{\mathbb R}}$, which is minimal (in the sense of pointwise ordering of functions defined on $X$ and taking values in ${\overline{\mathbb R}}$) in the family of all convex majorants of the function $f,$ i.~e., such convex majorant $g$ of the function $f,$ for which there is no other convex majorant $\phi$ of the same function $f,$ that differs from $g$ and satisfies the inequality
$\phi(x) \le g(x)$ for all $x \in X.$

\begin{theorem}\label{th3.2}
Let $X$ be a real vector space.
A convex function $g:X \to \overline{\mathbb R}$ is a minimal convex majorant of an $l$-proper function $f:X \to \overline{\mathbb R}$ if and only if its epigraph ${\rm epi\hspace{2pt}}g$ is a convex component of the epigraph ${\rm epi\hspace{2pt}}f$ of the function $f.$
\par The family $\Sigma^+(f)$ of all minimal convex majorants of any  $l$-proper function \linebreak $f:X \to \overline{\mathbb R}$ is nonempty and, furthermore, for each convex majorant $q$ of the function $f$ there exists a minimal convex majorant $g \in \Sigma^+(f)$ such that $g(x) \le q(x)$ for all $x \in X$

Moreover, the function $f$ is represented in the form
\begin{equation}\label{e3.1}
f(x) = \min\limits_{g \in \Sigma^+(f)}g(x)\,\,\text{для всех}\,\,x \in X.
\end{equation}
\end{theorem}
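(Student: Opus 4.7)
The plan is to reduce everything to Theorem \ref{th1.1} via the epigraph correspondence, using Lemma \ref{l3.1} to convert convex components into convex functions.

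First, for the characterization, I would start from a convex component $C \in \sigma^+({\rm epi\hspace{2pt}}f)$ and show that $C$ is ``vertically upward-closed'': if $(x,\alpha)\in C$ and $\beta\ge\alpha$, then $(x,\beta)\in C$. The trick is to form the upward hull $C' := \{(x,\beta)\mid\exists (x,\alpha)\in C,\ \beta\ge\alpha\}$, verify it is convex (direct) and contained in ${\rm epi\hspace{2pt}}f$ (since $f(x)\le\alpha\le\beta$), and invoke maximality of $C$ to conclude $C=C'$. Combined with the fact that the vertical slice $C_x$ is convex and bounded below by $f(x)>-\infty$ (because $f$ is $l$-proper), this forces $C_x$ to be empty, $[\gamma_0,\infty)$, or $(\gamma_0,\infty)$. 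So Lemma \ref{l3.1} applies and produces a convex majorant $g_C$ of $f$; since ${\rm epi\hspace{2pt}}g_C\supset C$ is convex and still contained in ${\rm epi\hspace{2pt}}f$, maximality gives ${\rm epi\hspace{2pt}}g_C=C$.

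Next I would argue minimality of $g_C$: any convex majorant $h\le g_C$ satisfies ${\rm epi\hspace{2pt}}h\supset{\rm epi\hspace{2pt}}g_C=C$, and ${\rm epi\hspace{2pt}}h$ is a convex subset of ${\rm epi\hspace{2pt}}f$, so maximality forces ${\rm epi\hspace{2pt}}h=C$ and thus $h=g_C$ (an epigraph determines its function). Conversely, if $g\in\Sigma^+(f)$, then ${\rm epi\hspace{2pt}}g$ is convex and lies in ${\rm epi\hspace{2pt}}f$, hence by Theorem \ref{th1.1} it sits inside some $C\in\sigma^+({\rm epi\hspace{2pt}}f)$; the associated $g_C$ is a convex majorant with $g_C\le g$, and minimality of $g$ yields $g_C=g$, so ${\rm epi\hspace{2pt}}g=C$ is a convex component.

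For the second assertion, nonemptiness of $\Sigma^+(f)$ is immediate from the nonemptiness of $\sigma^+({\rm epi\hspace{2pt}}f)$ in Theorem \ref{th1.1}; and for an arbitrary convex majorant $q$ of $f$, applying Theorem \ref{th1.1} to the convex set ${\rm epi\hspace{2pt}}q\subset{\rm epi\hspace{2pt}}f$ gives a convex component $C\supset{\rm epi\hspace{2pt}}q$, whence the corresponding $g_C\in\Sigma^+(f)$ satisfies $g_C\le q$ pointwise. Finally, for the representation \eqref{e3.1}, the inequality $f(x)\le\inf_{g\in\Sigma^+(f)}g(x)$ is trivial; for the reverse, at points $x$ with $f(x)=+\infty$ every majorant equals $+\infty$ there, while at points with $f(x)\in\mathbb{R}$ the singleton $\{(x,f(x))\}$ is a convex subset of ${\rm epi\hspace{2pt}}f$ and so lies in some convex component $C$, yielding $g_C(x)\le f(x)$ and therefore $g_C(x)=f(x)$, which realizes the minimum.

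The main obstacle is the first step: showing that maximality forces the vertical slices of a convex component to be upward-closed half-intervals bounded below, so that a convex component is genuinely an epigraph rather than some ``incomplete'' convex piece of ${\rm epi\hspace{2pt}}f$. Once this bridge between $\sigma^+({\rm epi\hspace{2pt}}f)$ and $\Sigma^+(f)$ is secured through Lemma \ref{l3.1}, all three statements reduce to direct applications of Theorem \ref{th1.1}.
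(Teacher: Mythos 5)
Your proof is correct and follows essentially the same route as the paper: both reduce the theorem to Theorem \ref{th1.1} via the epigraph correspondence and use Lemma \ref{l3.1} to turn a convex component into a minimal convex majorant. The only cosmetic difference is that where you establish upward-closedness of a convex component by forming the upward hull and invoking maximality, the paper cites its recession-cone result (the vector $(0_X,1)$ lies in $T^{\infty}$), whose proof is exactly your hull argument.
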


\proof The sufficiency of the first assertion is immediate from the definitions. Prove the necessity.

Let $g:X \to \overline{\mathbb R}$ be a minimal convex majorant of an  $l$-proper function $f:X \to \overline{\mathbb R}.$ Because ${\rm epi\hspace{2pt}}g$ is a convex set and ${\rm epi\hspace{2pt}}g \subset {\rm epi\hspace{2pt}}f$, then through Theorem~\ref{th1.1} in the family $\sigma^+({\rm epi\hspace{2pt}}f),$ consisting of all convex components of the epigraph of the function $f,$ there is a convex component $T$  such that ${\rm epi\hspace{2pt}}g \subset T.$
Since the vector $(0_X,\,1) \in X\times {\mathbb R}$ ($0_X$ is the origin of the space $X$) belongs to the recession cone $({\rm epi\hspace{2pt}}f)^{\infty}$ of the epigraph of $f,$ we conclude through Theorem~\ref{th1.2} that $(0_X,\,1) \in T^{\infty}.$ From this property of the set $T$ and the facts that the function $f$ is $l$-proper and
$T \subset {\rm epi\hspace{2pt}}f$ it follows then that for each point $x \in X$ the subset of reals $\{\gamma \in {\mathbb R}\,|\,(x,\,\gamma) \in T\}$ is either the empty set or a nonempty infinite half-interval bounded from below and unbounded from above.
Hence, through Lemma~\ref{l3.1} a convex component $T$ defines the convex function $g_T:x \to g_T(x):= \inf\{\gamma\,|\,(x,\,\gamma) \in T\}$  with $T$ being a subset of ${\rm epi}g_T.$ Now it follows immediately from the definition of the function $g_T$ and the inclusion  $T \subset {\rm epi\hspace{2pt}}f$ that  $f(x) \le g_T(x)$ for all $x \in X,$ i.~e., the function $g_T$ is a convex majorant of the function  $f,$ therefore  ${\rm epi\hspace{2pt}}g_T \subset {\rm epi\hspace{2pt}}f.$ Since $T$ is a convex component of the epigraph of the function $f$, it follows from the inclusions $T \subset {\rm epi\hspace{2pt}}g_T \subset {\rm epi\hspace{2pt}}f$ and the convexity of ${\rm epi\hspace{2pt}}g_T$ that ${\rm epi\hspace{2pt}}g_T = T.$  On the other hand, from  the inclusion ${\rm epi\hspace{2pt}}g \subset T = {\rm epi\hspace{2pt}}g_T$ and the fact that $g$ is a minimal convex majorant of the function $f,$ we conclude that $g = g_T$ and, consequently, ${\rm epi\hspace{2pt}}g = T.$
Thus, the first assertion of the theorem is proved.

The second assertion is then justified  through Theorem \ref{th1.1}.

To prove the equality \eqref{e3.1} we use the fact that through Theorem \ref{th1.1} the family of convex components $\sigma^+({\rm epi\hspace{2pt}}f)$ is a covering of the epigraph ${\rm epi\hspace{2pt}}f.$ From this we get that for each $x \in X$ with $f(x) < +\infty$ there exists a convex component $T_x \in \sigma^+({\rm epi}\hspace{2pt}f)$ which contains the point $(x,\,f(x)).$ Hence, for every point $x \in {\rm dom}f$ the equality $f(x) =g_{T_x}(x)$ holds and, consequently, $f(x) = \min\limits_{T \in \sigma^+({\rm epi\hspace{2pt}}f)}g_T(x) = \min\limits_{g \in \Sigma^+(f)}g.$
If the function $f$ takes the value $+\infty$ at a point $x \in X,$ then $g(x) = +\infty$ for all $g \in \Sigma^+(f)$ and, consequently, for such points the equality \eqref{e3.1} also holds.
\hfill $\Box$

The notion of a maximal concave minorant of a function is defined symmetrically to a minimal convex majorant.

A (concave) function $\omega:X \to {\overline{\mathbb{R}}}$ is called \textit{a (concave) minorant} of a function $f:X \to {\overline{\mathbb{R}}}$ if ${\rm hypo\hspace{2pt}}\omega \subset {\rm hypo\hspace{2pt}}f$ or, equivalently, if $\omega(x) \le f(x)$ for all $x \in X.$

By \textit{a maximal concave minorant} of a $f:X \to {\overline{\mathbb R}}$ we call  such its concave minorant $h:X \to {\overline{\mathbb R}}$, that is maximal (in the sense of the pointwise ordering) in the family of all concave minorants of the function $f,$  i.~e., such concave minorant $h$ of the function $f,$ for which there exists no other concave minorant $w$ of the function $f,$ that is different from $h$ and satisfies the inequality
$w(x) \ge h(x)$ for all $x \in X.$

A counterpart of Theorem \ref{th3.2} for maximal concave minorants is formulated as follows.

\begin{theorem}\label{th3.3}
Let $X$ be a real vector space.
A concave function $h:X \to \overline{\mathbb R}$ is a maximal concave minorant of an $u$-proper function $f:X \to \overline{\mathbb R}$ if and only if its hypograph ${\rm hypo\hspace{2pt}}g$ is a convex component of the hypograph ${\rm hypo\hspace{2pt}}f$ of the function~$f.$
\par The family $\Sigma^-(f)$ of all maximal concave minorants of any  $u$-proper function \linebreak $f:X \to \overline{\mathbb R}$ is nonempty and, furthermore, for each concave minorant $w$ of the function $f$ there exists a maximal convex minorant $h \in \Sigma^-(f)$ such that $w(x) \le h(x)$ for all $x \in X.$

Moreover, the function $f$ is represented in the form
\begin{equation}\label{e3.1a}
f(x) = \max\limits_{h \in \Sigma^-(f)}h(x)\,\,\text{для всех}\,\,x \in X.
\end{equation}
\end{theorem}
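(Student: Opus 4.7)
The plan is to reduce Theorem~\ref{th3.3} to its already proved counterpart Theorem~\ref{th3.2} by exploiting the symmetry $f \leftrightarrow -f$. First I would introduce the linear involution $\tau: X \times {\mathbb R} \to X \times {\mathbb R}$, $(x,\alpha) \mapsto (x,-\alpha)$, which bijectively sends ${\rm hypo\hspace{2pt}}f$ onto ${\rm epi\hspace{2pt}}(-f)$ and ${\rm hypo\hspace{2pt}}h$ onto ${\rm epi\hspace{2pt}}(-h)$. Since $\tau$ is affine, it preserves convex sets and inclusions, so $C \mapsto \tau(C)$ is a bijection between $\sigma^+({\rm hypo\hspace{2pt}}f)$ and $\sigma^+({\rm epi\hspace{2pt}}(-f))$.

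Next I would record the straightforward equivalences: $f$ is $u$-proper if and only if $-f$ is $l$-proper; $h$ is a concave minorant of $f$ if and only if $-h$ is a convex majorant of $-f$; and, because the pointwise ordering reverses under negation, $h$ is a maximal concave minorant of $f$ if and only if $-h$ is a minimal convex majorant of $-f$. In particular, $\Sigma^-(f) = \{-g \mid g \in \Sigma^+(-f)\}$.

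With this dictionary in place the three assertions follow directly. The first (characterization by hypograph components) follows from Theorem~\ref{th3.2} applied to $-f$: ${\rm epi\hspace{2pt}}(-h)$ is a convex component of ${\rm epi\hspace{2pt}}(-f)$ if and only if ${\rm hypo\hspace{2pt}}h = \tau^{-1}({\rm epi\hspace{2pt}}(-h))$ is a convex component of ${\rm hypo\hspace{2pt}}f$. The second assertion follows by taking an arbitrary concave minorant $w$ of $f$, applying Theorem~\ref{th3.2} to $-w$ as a convex majorant of $-f$ to obtain a minimal convex majorant $g \in \Sigma^+(-f)$ with $g \le -w$, and setting $h := -g \in \Sigma^-(f)$, which then satisfies $h \ge w$. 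Finally, the representation \eqref{e3.1a} is obtained by negating \eqref{e3.1} applied to $-f$, so that the minimum over $\Sigma^+(-f)$ becomes the maximum over $\Sigma^-(f)$.

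There is no substantive obstacle here since the content is genuinely symmetric; the only care needed is to check that the $u$-properness hypothesis translates correctly under negation and that the vertical reflection $\tau$ converts the recession condition $(0_X,1) \in ({\rm epi\hspace{2pt}}(-f))^\infty$ used in the proof of Theorem~\ref{th3.2} into its hypograph analogue $(0_X,-1) \in ({\rm hypo\hspace{2pt}}f)^\infty$.
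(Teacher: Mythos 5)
Your reduction to Theorem~\ref{th3.2} via the reflection $(x,\alpha)\mapsto(x,-\alpha)$ is correct, and it is exactly the route the paper intends: Theorem~\ref{th3.3} is stated there without proof as the symmetric counterpart of Theorem~\ref{th3.2} obtained by passing from $f$ to $-f$. Your write-up simply makes that symmetry argument explicit, including the correct translation of $u$-properness and of maximality under negation.
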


Before completing this section we will discuss shortly some peculiarities  of minimal convex majorants and maximal concave minorants corresponding positively homogeneous functions.

Recall, that an extended-real-valued function  $p:X \to {\overline{\mathbb R}}$ defined on a real vector space $X$ is called \textit{positively homogeneous} (for short, p.h.), if
\begin{equation}\label{e4.6}
p(\lambda x)=\lambda p(x)\,\,\text{for all}\,\,x \in X\,\,\text{and all}\,\,\lambda > 0.
\end{equation}
or, equivalently, if its epigraph ${\rm epi\hspace{2pt}}p$
is a cone in $X \times {\mathbb R}.$


A convex (respectively, concave) p.h. function $p:X \to {\overline{\mathbb R}}$ is called \textit{sublinear} (respectively, \textit{superlinear}).

Recall that convex and, consequently, sublinear functions are supposed to be $l$-proper, while superlinear ones are $u$-proper.

A rather comprehensive overview of properties of various classes of p.h. functions is contained in \cite{GorTr2016}.

\begin{theorem}\label{th3.7}
Let $X$ be a real vector space. Each minimal convex majorant of $l$-proper p.h. function defined on $X$ is sublinear, whereas each maximal concave minorant of $u$-proper p.h. function defined on $X$ is superlinear.
\end{theorem}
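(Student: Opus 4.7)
The plan is to reduce the statement to the structural facts already established about epigraphs, hypographs, and convex components. The key observation is that positive homogeneity of a function is encoded geometrically by the fact that its epigraph (resp.\ hypograph) is a cone in $X \times \mathbb{R}$, while sublinearity (resp.\ superlinearity) is the same as convexity plus this conicity.

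First I would fix an $l$-proper p.h.\ function $f:X \to \overline{\mathbb R}$ and a minimal convex majorant $g \in \Sigma^+(f)$. By Theorem~\ref{th3.2}, the epigraph $\mathrm{epi}\, g$ is a convex component of $\mathrm{epi}\, f$. Since $f$ is positively homogeneous, its epigraph $\mathrm{epi}\, f$ is a cone in $X \times \mathbb R$: indeed, if $(x,\alpha) \in \mathrm{epi}\, f$ and $\lambda > 0$, then $f(\lambda x) = \lambda f(x) \le \lambda \alpha$, so $(\lambda x, \lambda \alpha) \in \mathrm{epi}\, f$. By Remark~\ref{rem1.1}, every convex component of a cone is itself a cone, so $\mathrm{epi}\, g$ is a convex cone. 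This forces $g$ itself to be positively homogeneous: for $x \in X$ with $g(x) < +\infty$ and $\lambda > 0$, applying the cone property to $(x, g(x) + \varepsilon)$ for each $\varepsilon > 0$ and taking infima yields $g(\lambda x) \le \lambda g(x)$, while applying it to $(\lambda x, g(\lambda x) + \varepsilon)$ with scalar $1/\lambda$ gives the reverse inequality (with the convention $0 \cdot (+\infty) = +\infty$ handling the case $g(x) = +\infty$). Being convex and p.h., $g$ is sublinear by definition.

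The case of maximal concave minorants of a $u$-proper p.h.\ function $f$ is symmetric, using Theorem~\ref{th3.3} in place of Theorem~\ref{th3.2}. A maximal concave minorant $h \in \Sigma^-(f)$ has $\mathrm{hypo}\, h$ equal to a convex component of $\mathrm{hypo}\, f$; positive homogeneity of $f$ makes $\mathrm{hypo}\, f$ a cone; Remark~\ref{rem1.1} promotes the convex component $\mathrm{hypo}\, h$ to a convex cone; and the same computation on the hypograph shows that $h$ is positively homogeneous, hence superlinear.

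The only non-routine step I expect is the careful verification that the conicity of $\mathrm{epi}\, g$ translates back into positive homogeneity of $g$ at points where $g$ may take the value $+\infty$; everything else is an invocation of the already-proved correspondence between minimal convex majorants and convex components together with Remark~\ref{rem1.1}. This last point is handled by separating the cases $g(x) < +\infty$ and $g(x) = +\infty$ and using the definition $g(x) = \inf\{\gamma \mid (x,\gamma) \in \mathrm{epi}\, g\}$ together with the cone property of $\mathrm{epi}\, g$.
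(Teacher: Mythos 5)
Your proposal is correct and follows essentially the same route as the paper: identify $\mathrm{epi}\,g$ (resp.\ $\mathrm{hypo}\,h$) with a convex component of the epigraph (resp.\ hypograph) via Theorem~\ref{th3.2} (resp.\ Theorem~\ref{th3.3}), note that this epigraph is a cone because the function is positively homogeneous, invoke Remark~\ref{rem1.1} to conclude the component is a convex cone, and read off sublinearity (resp.\ superlinearity). Your extra verification that conicity of the epigraph yields positive homogeneity of $g$, including at points where $g=+\infty$, is a detail the paper leaves implicit but is entirely consistent with its argument.
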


\proof Prove the claim only for minimal convex majorants of a $l$-proper p.h. function. Since the epigraph of a $l$-proper p.h. function $p$ is a cone in $X \times {\mathbb R}$, that does not contain vertical lines, each convex component of ${\rm epi\hspace{2pt}}p$ is a convex cone not containing vertical lines.  Consequently, through Theorem~\ref{th3.2} minimal convex majorants of $p$ are convex p.h. functions, i.e., sublinear ones. \hfill $\square$

It follows from Theorems \ref{th3.2} and \ref{th3.7} that \textit{for each $l$-proper p.h. function $p:X \to \overline{{\mathbb R}}$ the family $S^+(p)$ of all minimal sublinear majorants of $p$ is nonempty and $p$ is represented in the form}
\begin{equation*}\label{e4.2a}
p(x)=\min\limits_{\varphi \in S^+(p)}\varphi(x)\,\,\text{for all}\,\,x \in X.
\end{equation*}

Note that Castellani in \cite{Cast1,Cast2} proved a related result: each $l$-proper p.h. function $p:X \to \overline{{\mathbb R}}$ defined on a Banach space $X$ can be represented in the form
\begin{equation*}\label{e4.2b}
p(x)=\min\limits_{\varphi \in \Phi}\varphi(x)\,\,\text{for all}\,\,x \in X,
\end{equation*}
where $\Phi$ is a family of extended-real-valued sublinear functions.

To prove this result Castellani used arguments not connected with minimal sublinear majorants.

\section{The lower envelope presentation of real-valued functions by subfamilies of minimal convex majorants that are Lipschitz continuous}
\label{sec3}

Note, that even in the case when the function $f:X \to \overline{\mathbb R}$ takes only finite values for all $x \in X$ among its minimal convex majorants one can find those that take the value $+\infty$ on some parts of $X.$  For example, the convex functions $$g_1(x_1,x_2) = \begin{cases} 0, & \text{when $x_1=0,$}\\ +\infty, & \text{when $x_1 \ne 0,$}\end{cases} \,\,\,\,\text{and}\,\,\,\,g_2(x_1,x_2) = \begin{cases} 0, & \text{when $x_2=0,$}\\ +\infty, & \text{when $x_2 \ne 0,$}\end{cases}$$ are minimal convex majorants of the function $f(x_1,x_2) = \sqrt{|x_1x_2|}.$  It is easily verified that if we remove these majorants from $\Sigma^+(f),$ then the equality \eqref{e3.1} will not hold for $f$ on the lines $x_1 = 0$ and $x_2 = 0.$

Our main aim now is to establish characteristic properties of those real-valued functions defined on a real normed space $X$ that admit a lower (upper) envelope representation by such their
minimal convex majorants (maximal concave minorants) which take only finite real values and, moreover, are Lipschitz continuous on the whole space~$X.$

\begin{theorem}\label{th4.1}
Let $X$ be a normed vector space. A real-valued function $f:X \to {\mathbb R}$ is Lipschitz continuous on the whole space $X$ with the Lipschitz constant $k >0$ if and only if each minimal convex majorant (equivalently, each maximal concave minorant) of $f$ is also real-valued and Lipschitz continuous on $X$ with Lipschitz constant not exceeding $k.$
\end{theorem}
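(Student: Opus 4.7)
The plan is to exploit the lower-envelope representation $f(x)=\min_{g\in\Sigma^+(f)}g(x)$ from Theorem~\ref{th3.2} in tandem with the Pasch--Hausdorff regularization (infimal convolution with a multiple of the norm), which serves as the key tool for producing a Lipschitz competitor beneath any given convex majorant.

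For the sufficiency direction (``$\Leftarrow$''), assume every $g\in\Sigma^+(f)$ is real-valued and Lipschitz on $X$ with constant at most $k$. For any $x_1,x_2\in X$, Theorem~\ref{th3.2} provides some $g^{\ast}\in\Sigma^+(f)$ with $f(x_2)=g^{\ast}(x_2)$, and then
$$
f(x_1)\le g^{\ast}(x_1)\le g^{\ast}(x_2)+k\|x_1-x_2\|=f(x_2)+k\|x_1-x_2\|.
$$
Swapping the roles of $x_1$ and $x_2$ yields the Lipschitz continuity of $f$ with constant $k$.

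For the necessity direction (``$\Rightarrow$''), fix $g\in\Sigma^+(f)$ and introduce its Pasch--Hausdorff envelope
$$
\tilde g(x):=\inf_{y\in X}\bigl\{g(y)+k\|x-y\|\bigr\}.
$$
I would verify, in order: (i) $\tilde g$ is convex, as the infimal convolution of the two convex functions $g$ and $k\|\cdot\|$; (ii) $\tilde g(x)\le g(x)$, by taking $y=x$ in the infimum; (iii) $\tilde g\ge f$ pointwise, since the $k$-Lipschitz hypothesis on $f$ gives $f(x)\le f(y)+k\|x-y\|\le g(y)+k\|x-y\|$ for every $y$; (iv) $\tilde g$ is real-valued, bounded below by $f(x)$ via (iii) and above by $g(y_0)+k\|x-y_0\|$ for any $y_0\in{\rm dom}\,g$ (which is nonempty because $g$ is $l$-proper); and (v) $\tilde g$ is $k$-Lipschitz, by the standard estimate $\tilde g(x_1)\le\tilde g(x_2)+k\|x_1-x_2\|$. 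Items (i), (iii), and (iv) exhibit $\tilde g$ as a convex majorant of $f$, and (ii) shows it is pointwise dominated by $g$; the minimality of $g$ in $\Sigma^+(f)$ therefore forces $\tilde g\equiv g$, and so $g$ inherits the real-valuedness and Lipschitz constant $k$ from $\tilde g$.

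The main obstacle is the verification of step (iii): this is precisely where the Lipschitz hypothesis on $f$ is indispensable, for without it the regularization $\tilde g$ could slip below $f$ and the minimality argument would collapse. The counterpart statement for maximal concave minorants follows by applying the proved equivalence to $-f$ (which has the same Lipschitz constant as $f$) together with the sign-reversal correspondence $h\mapsto -h$ between $\Sigma^-(f)$ and $\Sigma^+(-f)$.
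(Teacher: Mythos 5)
Your proof is correct, but it takes a genuinely different route from the paper's. The paper argues entirely through recession cones: $f$ is $k$-Lipschitz iff the cone $E_k=\{(x,\alpha)\mid k\|x\|\le\alpha\}$ is contained in $(\mathrm{epi}\,f)^\infty$, and since Theorem~\ref{th1.3} identifies $(\mathrm{epi}\,f)^\infty$ with the intersection of the recession cones of the convex components of $\mathrm{epi}\,f$ (which by Theorem~\ref{th3.2} are exactly the epigraphs of the minimal convex majorants), both implications drop out of a single chain of equivalences. You instead prove the two directions separately: sufficiency via the pointwise-minimum representation of Theorem~\ref{th3.2}, and necessity by forming the infimal convolution $\tilde g=g\,\square\,(k\|\cdot\|)$ of a given minimal majorant $g$, checking that the Lipschitz hypothesis on $f$ keeps $\tilde g$ above $f$, and invoking minimality to force $\tilde g=g$. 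All five of your verification steps hold (in particular, real-valuedness of $\tilde g$ follows from $f\le\tilde g\le g(y_0)+k\|\cdot-y_0\|$, and $l$-properness of $g$ guarantees $\mathrm{dom}\,g\ne\varnothing$), so the argument is complete. What each approach buys: the paper's recession-cone argument is shorter and symmetric in the two directions once the machinery of Sections~\ref{sec1}--\ref{sec2} is in place, and it makes transparent that the Lipschitz property is precisely a recession direction that passes to every convex component; your argument is more self-contained convex analysis and, notably, anticipates the ``build a smaller Lipschitz competitor and let minimality force equality'' device that the paper itself deploys later in Theorem~\ref{th4.4} and Theorem~\ref{th4.5k} — indeed your $\tilde g$ is the Pasch--Hausdorff envelope applied to the majorant $g$ rather than to $f$ as in Section~\ref{sec3}.
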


\proof It immediately follows from the definition that a function $f: X \to {\mathbb{R}}$ is Lipschitz continuous on $X$ with the Lipschitz constant $k>0$ if and only if the convex cone $E_k:=\{(x,\alpha) \in X \times {\mathbb{R}} \mid k\|x\| \le \alpha\}$ is contained into the recession cone $({\rm epi\hspace{2pt}}f)^\infty$ of its epigraph. Since through Theorem \ref{th1.1} $({\rm epi\hspace{2pt}}f)^\infty = \bigcap\{T^\infty \mid T \in \sigma^+({\rm epi\hspace{2pt}}f)\},$ the inclusion $E_k \subset ({\rm epi\hspace{2pt}}f)^\infty$ is equivalent to the condition $E_k \subset T^\infty$ for all $T \in \sigma^+({\rm epi\hspace{2pt}}f).$ Due to Theorem \ref{th3.2} the family $\sigma^+({\rm epi\hspace{2pt}}f)$ of convex components of the epigraph ${\rm epi\hspace{2pt}}f$ coincides with the family $\{{\rm epi\hspace{2pt}}g \mid g \in \Sigma^+(f)\},$ where $\Sigma^+(f)$ is the family of minimal convex majorants of the function $f.$ Consequently, the Lipschitz continuity of $f$ is equivalent to the condition $E_k \subset ({\rm epi\hspace{2pt}}g)^\infty$ for all $g \in \Sigma^+(f)$ and this in turn is equivalent to that each minimal convex majorant $g$ of the function $f$ is Lipschitz continuous on $X$ with its Lipschitz constant being not grater then~$k.$
\hfill $\square$

Further we show that replacing in \eqref{e3.1} the minimum by the infimum we can enlarged the class of functions which can be represent as the lower envelope of minimal convex majorants which are Lipschitz continuous on $X.$ We will denote the subfamily of minimal convex majorants of a function $f$ which are Lipschitz continuous on the whole space $X$  by the symbol $\Sigma_{Lip}^+(f).$

Let us begin with some preliminaries.

Let $X$ be a metric space with a distance function $d:X\times X \to {\mathbb R}$ and let $f:X \to {\mathbb R}$ be a real-valued function defined on $X.$ For any real $k > 0$ the function $f_k:x \to f_k(x)$ with
\begin{equation}\label{e4.2}
f_k(x) := \sup\limits_{y \in X}(f(y) - kd(x,y))\,\,\text{for all}\,\,x \in X
\end{equation}
is called \cite{Bauschke,RW98} \textit{the Pasch-Hausdorff envelope} of $f$ for the value $k.$

It follows from the inequality
\begin{equation}\label{e4.2d}
f(y) - kd(y,x) \ge f(y) - kd(y,\bar{x}) - kd(\bar{x},x)\,\,\text{for all}\,\,x,\bar{x} \in X\,\,\text{and all}\,\, k > 0,
\end{equation}
that for all $k > 0$ either $f_k(x) \equiv +\infty$ or $f_k(x) < +\infty$ for all $x \in X.$

In the next proposition we summarize the main properties of the Pasch-Hausdorff envelopes and provide them with short proofs (for more details and historical comments we refer to \cite{Bauschke,RW98,HU,GT2015}).

\begin{proposition}\label{pr4.1}
Let $f:X \to {\mathbb R}$ be a real-valued function defined on a metric space $X$ and let $k >0.$ When $f_{k}(x) < +\infty$  for all $x \in X,$ then $f_k$ is Lipschitz continuous on $X$ with Lipschitz constant $k$ and, moreover, $f_k$ is the least of all majorants of $f$ that are Lipschitz continuous on $X$ with Lipschitz constant $k.$

Furthermore, if there exists $\bar{k} > 0$ such that $f_{\bar{k}}(x) < +\infty$  for all $x \in X,$ then for each $x \in X$ the function $k \to f_k(x)$ is nonincreasing over the interval $[\bar{k}, +\infty)$ and
$$
({\rm cl}f)^\uparrow(x) = \inf\limits_{k \ge \bar{k}}f_k(x)\,\,\text{for all}\,\,x \in X.
$$
Here $({\rm cl}f)^\uparrow$ stands for the upper semicontinuous closure of $f$.
\end{proposition}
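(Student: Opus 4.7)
\textbf{Plan for Proposition \ref{pr4.1}.} The first assertion is a direct consequence of the inequality \eqref{e4.2d}. Taking the supremum over $y \in X$ on both sides yields $f_k(\bar{x}) \leq f_k(x) + k d(\bar{x}, x)$, and by symmetry $|f_k(x) - f_k(\bar{x})| \leq k d(x, \bar{x})$, so $f_k$ is Lipschitz continuous with constant $k$. Putting $y = x$ in \eqref{e4.2} shows $f_k \geq f$. For the minimality statement, if $\varphi$ is any Lipschitz majorant of $f$ with constant $k$, then for all $x, y \in X$ we have $\varphi(x) \geq \varphi(y) - k d(x,y) \geq f(y) - k d(x,y)$; taking the supremum over $y$ gives $\varphi(x) \geq f_k(x)$.

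Monotonicity of $k \mapsto f_k(x)$ on $[\bar{k}, +\infty)$ is immediate: if $\bar{k} \leq k_1 \leq k_2$, then for every $y$, $f(y) - k_1 d(x,y) \geq f(y) - k_2 d(x,y)$, so $f_{k_1}(x) \geq f_{k_2}(x)$. Set $h(x) := \inf_{k \geq \bar{k}} f_k(x)$, which by monotonicity equals $\lim_{k \to +\infty} f_k(x)$.

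To prove $h = ({\rm cl}f)^\uparrow$, one inclusion is straightforward. Each $f_k$ with $k \geq \bar k$ is continuous, hence upper semicontinuous, and the pointwise infimum of any family of upper semicontinuous functions is again upper semicontinuous (since $\{z : h(z) < \alpha\} = \bigcup_{k} \{z : f_k(z) < \alpha\}$ is open). Combined with $h \geq f$, this makes $h$ an upper semicontinuous majorant of $f$, so minimality of the usc closure yields $({\rm cl}f)^\uparrow \leq h$.

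The reverse inequality $h(x) \leq ({\rm cl}f)^\uparrow(x)$ is the main obstacle, and the only place where the hypothesis $f_{\bar{k}}(x) < +\infty$ is needed. Fix $x \in X$ and $\varepsilon > 0$, and invoke the metric-space formula $({\rm cl}f)^\uparrow(x) = \lim_{\delta \to 0^+} \sup\{f(y) : d(y,x) < \delta\}$ to pick $\delta > 0$ such that $f(y) \leq ({\rm cl}f)^\uparrow(x) + \varepsilon$ whenever $d(y,x) < \delta$. Set $M := f_{\bar{k}}(x)$, which is finite; then $f(y) \leq M + \bar{k} d(y,x)$ for every $y$, so for $k \geq \bar{k}$ and $d(y,x) \geq \delta$,
\[
f(y) - k d(y,x) \leq M - (k - \bar{k}) d(y,x) \leq M - (k - \bar{k}) \delta.
\]
Taking $k$ large enough that the right-hand side is below $({\rm cl}f)^\uparrow(x) + \varepsilon$, and observing that for $d(y,x) < \delta$ trivially $f(y) - k d(y,x) \leq f(y) \leq ({\rm cl}f)^\uparrow(x) + \varepsilon$, we obtain $f_k(x) \leq ({\rm cl}f)^\uparrow(x) + \varepsilon$, and hence $h(x) \leq ({\rm cl}f)^\uparrow(x) + \varepsilon$. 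Letting $\varepsilon \to 0^+$ completes the argument.
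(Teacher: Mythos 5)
Your proof is correct, and for most of the proposition (the Lipschitz estimate via \eqref{e4.2d}, minimality among $k$-Lipschitz majorants, monotonicity in $k$, and the easy inequality $({\rm cl}f)^\uparrow \le \inf_{k\ge\bar k}f_k$ via upper semicontinuity of an infimum of continuous functions) it coincides with the paper's argument. The one place where you diverge is the hard inequality $\inf_{k\ge\bar k}f_k(\bar x) \le ({\rm cl}f)^\uparrow(\bar x)$. The paper extracts, for each $k$, an approximate maximizer $y_k$ with $f(y_k)-k\,d(y_k,\bar x)\ge f_k(\bar x)-\tfrac1k$, uses the a priori bound $f(y)\le f_{\bar k}(\bar x)+\bar k\,d(y,\bar x)$ to show $(k-\bar k)\,d(y_k,\bar x)$ stays bounded, hence $y_k\to\bar x$, and then passes to $\limsup_{k}f(y_k)\le({\rm cl}f)^\uparrow(\bar x)$. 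You instead fix $\varepsilon>0$, use the definition of the usc closure to find a $\delta$-ball on which $f\le({\rm cl}f)^\uparrow(\bar x)+\varepsilon$, and show via the same a priori bound $f(y)\le f_{\bar k}(\bar x)+\bar k\,d(y,\bar x)$ that the contribution to the supremum from $d(y,\bar x)\ge\delta$ is eventually below that level, so $f_k(\bar x)\le({\rm cl}f)^\uparrow(\bar x)+\varepsilon$ for large $k$. Both arguments hinge on the identical estimate; yours avoids extracting a sequence and works directly with the $\varepsilon$--$\delta$ form of the $\limsup$, which is arguably slightly cleaner, while the paper's sequence-based version makes visible the localization phenomenon ($y_k\to\bar x$) that explains why the Pasch--Hausdorff envelopes recover the usc closure. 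Either route is complete; just note (as your argument implicitly requires) that $({\rm cl}f)^\uparrow(\bar x)$ is finite, which follows from $f\le f_{\bar k}$ with $f_{\bar k}$ continuous.
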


\proof It is easy to see from \eqref{e4.2d} that, provided $f_k(x) < +\infty\,\forall\,x \in X,$  the function $f_k$ is Lipschitz continuous on $X$ with Lipschitz constant $k.$ Besides, letting $y=x$ in \eqref{e4.2}, we get $f(x) \le f_k(x)\,\,\forall\,\,x \in X.$ Thus, $f$ is majorized by $f_k$ with $f_k$ being Lipschitz continuous on $X$ with Lipschitz constant $k.$ Now, consider an arbitrary function $g:X \to {\mathbb R}$ that is  Lipschitz continuous on $X$ with Lipschitz constant $k$ and such that $f(x) \le g(x)$ for all $x \in X.$ Since $g(x) \ge g(y) - k\|y - x\| \ge f(y) - k\|y - x\|$ for all $x,y \in X,$ we have $g(x) \ge f_k(x)$ for all $x \in X.$ It proves that $f_k$ is the least of all Lipschitz continuous functions with Lipschitz constant $k$ which majorize~$f.$

Suppose that for $f$ there exists $\bar{k} > 0$ such that $f_{\bar{k}}(x) < +\infty$ for all $x \in X$ and take $k_1,k_2$ satisfying $\bar{k} \le k_1 \le k_2.$ It is easy to get from the inequality
$$
f(y) - k_2\|y - x\| \le f(y) - k_1\|y - x\|\,\,\text{for all}\,\,x,y \in X
$$
that $f_{k_2}(x) \le f_{k_1}(x)\,\,\forall\,\,x \in X.$

Thus, for each $x \in X$ the function $k \to f_k(x)$ is nonincreasing on $[\bar{k},+\infty)$ and, consequently, $\lim\limits_{k \to +\infty}f_k(x) = \inf\limits_{k \ge \bar{k}}f_k(x)$ for all $x \in X.$

Since $f(x) \le f_k(x)\,\,\forall\,\,x \in X,$ we have that $f(x) \le \inf\limits_{k \ge \bar{k}}f_k(x)\,\,\forall\,\,x \in X.$ Because the function $x \to \inf\limits_{k \ge \bar{k}}f_k(x)$ is upper semicontinuous on $X$ we have also that $({\rm cl}f)^\uparrow(x) \le \inf\limits_{k \ge \bar{k}}f_k(x)\,\,\forall\,\,x \in X.$ Show that the opposite inequality $({\rm cl}f)^\uparrow(x) \ge \inf\limits_{k \ge \bar{k}}f_k(x)\,\,\forall\,\,x \in X$ holds as well and thereby prove the equality $({\rm cl}f)^\uparrow(x)=\inf\limits_{k> 0}f_k(x)\,\,\forall\,\,x \in X.$

Take an arbitrary point $\bar{x} \in X$ and choose, for each positive real $k \ge \bar{k},$ a point $y_k \in X$ such that $$f(y_k)-kd(y_k,\bar{x}) \ge f_k(\bar{x}) -\displaystyle\frac{1}{k}.$$

From the last inequality, since $f_{\bar{k}}(\bar{x}) + \bar{k}d(x,\bar{x}) \ge f_{\bar{k}}(x) \ge f(x)\,\,\forall\,\,x \in X,$  we get for $k \ge \bar{k}$ that
$$
kd(y_k,\bar{x}) \le f_{\bar{k}}(y_k) - f_k(\bar{x}) + \displaystyle\frac{1}{k} \le f_{\bar{k}}(\bar{x}) +\bar{k}d(y_k,\bar{x}) - f(\bar{x})+1,
$$
which implies  $(k - \bar{k})d(y_k,\bar{x}) \le f_{\bar{k}}(\bar{x}) - f(\bar{x})+1.$
Consequently,
$\lim\limits_{k \to \infty}d(y_k,\,\bar{x}) = 0$ and, hence, $\lim\limits_{k \to \infty}y_k = \bar{x}.$

Using the inequality
$$f(y_k) \ge f(y_k)-kd(y_k,\bar{x}) \ge f_k(\bar{x}) - \displaystyle\frac{1}{k},$$
we obtain
$$
({\rm cl}f)^\uparrow(\bar{x}) = \displaystyle\limsup_{y \to \bar{x}}f(y) \ge \limsup\limits_{k \to +\infty}f(y_k) \ge \lim\limits_{k \to \infty}f_k(\bar{x}) -\lim\limits_{k \to \infty}\displaystyle\frac{1}{k} = \inf\limits_{k \ge \bar{k}}f_k(\bar{x}).
$$
Due to the arbitrary choice of $\bar{x} \in X,$ we get $({\rm cl}f)^\uparrow(x) \ge \inf\limits_{k \ge \bar{k}} f_k(x)\,\,\forall\,\,x \in X.$

Thus, $({\rm cl}f)^\uparrow(x) = \inf\limits_{k \ge \bar{k}} f_k(x)\,\,\forall\,\,x \in X.$ \hfill $\Box$

The assumption that for some $\bar{k} > 0$ the Pasch-Hausdorff envelope of a function $f$ for the value $\bar{k}$ takes finite values for all $x \in X,$ i.e., $f_{\bar{k}}(x) < +\infty\,\,\forall\,\,x \in X,$ is equivalent to the fact that the function $f$  is majorized by a function that is Lipschitz continuous on $X$ with Lipschitz constant $\bar{k}$. When this is the case for a function $f$ we will say that $f$ is \textit{$\bar{k}$-Lipschitz bounded from above} on $X.$  We will simply say that $f$ is \textit{Lipschitz bounded from above} if it is $k$-Lipschitz bounded from above on $X$ for some $k$. Symmetrically, a function $f$ is \textit{Lipschitz bounded from below} if $-f$ is Lipschitz bounded from above. At last, a function $f$ is called \textit{Lipschitz bounded} if it is Lipschitz bounded both from below and from above.

\begin{proposition}\label{pr4.2}
Let $p:X \to {\mathbb R}$ be a p.h. function defined on a real normed vector space $X.$
The Pasch-Hausdorff envelope $p_k$ of a p.h. function $p$ for any value $k$ is also a p.h. function. A p.h. function $p:X \to {\mathbb R}$ is upper semicontinuous on $X$ if and only if there exists $\bar{k} > 0$ such that $p(x) \le \bar{k}\|x\|$ for all $x \in X$ and $p$ is represented in the form
\begin{equation}\label{e4.1h}
p(x) = \inf\limits_{k \ge \bar{k}}p_k(x)\,\,\text{for all}\,\,x \in X,
\end{equation}
where $p_k$ is the Pasch-Hausdorff envelope of $p$ for the value $k.$
\end{proposition}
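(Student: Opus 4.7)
The plan is to split the proposition into three independent pieces: positive homogeneity of the envelope, the necessity of the growth bound together with the formula, and the sufficiency.

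First, I would verify that $p_k$ is positively homogeneous by a direct change of variables in \eqref{e4.2}. For any $\lambda > 0$ and $x \in X$, substituting $y = \lambda z$ in the supremum defining $p_k(\lambda x)$ and using $p(\lambda z) = \lambda p(z)$ together with $\|\lambda z - \lambda x\| = \lambda \|z-x\|$ factors out $\lambda$, yielding $p_k(\lambda x) = \lambda\, p_k(x)$.

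For the ``only if'' direction, assume $p$ is upper semicontinuous on $X$. Positive homogeneity forces $p(0)=0$, since $p(0) = p(\lambda\cdot 0) = \lambda p(0)$ for every $\lambda > 0$. Upper semicontinuity at $0$ then supplies a $\delta > 0$ with $p(x) \le 1$ whenever $\|x\| \le \delta$; applying p.h.\ to the rescaling $\delta\, x/\|x\|$ gives $p(x) \le \bar{k}\|x\|$ for all $x \in X$, where $\bar{k} := 1/\delta$. In particular $p$ is majorized by the Lipschitz function $x \mapsto \bar{k}\|x\|$, so $p$ is $\bar{k}$-Lipschitz bounded from above and the Pasch--Hausdorff envelopes $p_k$ are finite-valued for every $k \ge \bar{k}$. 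Proposition~\ref{pr4.1} then delivers $({\rm cl}\,p)^\uparrow(x) = \inf_{k \ge \bar{k}} p_k(x)$ for all $x \in X$, and since $p$ is u.s.c.\ we have $p = ({\rm cl}\,p)^\uparrow$, which gives \eqref{e4.1h}.

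For the ``if'' direction, assume $p(x) \le \bar{k}\|x\|$ for all $x$ and that \eqref{e4.1h} holds. The growth bound again ensures $p_k$ is real-valued for every $k \ge \bar{k}$, and by Proposition~\ref{pr4.1} each such $p_k$ is Lipschitz continuous, hence upper semicontinuous; a pointwise infimum of u.s.c.\ functions is u.s.c., so \eqref{e4.1h} forces $p$ to be u.s.c. The main (and only mildly delicate) step is the passage from u.s.c.\ at the origin to the global growth bound $p(x) \le \bar{k}\|x\|$; once that is in hand, everything reduces to invoking Proposition~\ref{pr4.1}.
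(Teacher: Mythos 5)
Your proposal is correct and follows essentially the same route as the paper: the same change of variables for positive homogeneity of $p_k$, the same use of upper semicontinuity at the origin plus rescaling to get the bound $p(x)\le\bar{k}\|x\|$, and the same appeal to Proposition~\ref{pr4.1} together with $p=({\rm cl}\,p)^\uparrow$ for the representation. The only difference is that you spell out the "if" direction (finiteness and Lipschitz continuity of each $p_k$, infimum of u.s.c.\ functions is u.s.c.), which the paper dismisses as straightforward.
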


\proof If $p$ is a p.h. function, then for all $x \in X$ and all $\lambda > 0$ we have
$$
p_k(\lambda x) = \sup\limits_{y \in X}(p(y)-k\|y-\lambda x\|)=\lambda \sup\limits_{y \in X}(p(\lambda^{-1}y) -k\|\lambda^{-1}y -x\|)= \lambda p_k(x).
$$
Thus, $p_k$ is also a p.h. function.

To prove the second assertion we note first that, if a p.h. function $p:X \to {\mathbb R}$ is upper semicontinuous on $X,$ it is upper semicontinuous at $x=0$ and, consequently, for any $\varepsilon > 0$ we can find $\delta > 0$ such that $p(x) \le p(0) + \varepsilon = \varepsilon$ for all $x \in B_\delta(0).$ It implies through positively homogeneity of $p$ that $p(x) \le \displaystyle\frac{\varepsilon}{\delta}\|x\|$ for all $ x \in X.$ Hence, an arbitrary upper semicontinuous function $p$ is majorized by a p.h. function that is Lipschitz continuous on $X$ and, consequently, there exists $\bar{k} > 0$ such that $p_{\bar{k}}(x) < +\infty$ for all $x \in X.$

To complete the proof of the necessary part of the second assertion, it is sufficient to apply the results of Proposition \ref{pr4.1}.

The sufficient part is straightforward.
\hfill $\square$

To prove the next theorem we need the following lemma.

\begin{lemma}\label{l4.1}
Let $g:X \to {\mathbb R}$ and $\tilde{g}:X \to {\mathbb R}$ be real-valued convex functions defined on a normed space $X.$ If $g(x) \le \tilde{g}(x)$ for all $x \in X$ and the function $\tilde{g}$ is Lipschitz continuous on $X$ with a Lipschitz constant $k > 0$ then $g$ is also Lipschitz continuous with the same Lipschitz constant $k.$
\end{lemma}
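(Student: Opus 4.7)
\medskip

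\noindent\textbf{Proof proposal.} The plan is to argue directly from the convexity of $g$ and the Lipschitz estimate for $\tilde g$, using a ``stretching'' of the segment $[x,y]$ that lets us trade the unknown gap $\tilde g(x)-g(x)$ for a term that vanishes in the limit. Fix $x,y\in X$ with $x\neq y$, and for each $t>0$ introduce the auxiliary point
\[
z_t \;:=\; y + \frac{t}{\|y-x\|}(y-x),
\]
so that $y$ lies on the segment $[x,z_t]$ with $y=(1-\lambda_t)x+\lambda_t z_t$ for the coefficient $\lambda_t=\|y-x\|/(\|y-x\|+t)\in(0,1)$.

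Applying convexity of $g$ at $y$ yields $g(y)\le(1-\lambda_t)g(x)+\lambda_t g(z_t)$, which rearranges to
\[
g(y)-g(x)\;\le\;\lambda_t\bigl(g(z_t)-g(x)\bigr).
\]
Since $g\le\tilde g$ and $\tilde g$ is $k$-Lipschitz, I would then estimate
\[
g(z_t)-g(x)\;\le\;\tilde g(z_t)-g(x)\;\le\;\tilde g(x)+k\|z_t-x\|-g(x)\;=\;\bigl(\tilde g(x)-g(x)\bigr)+k\bigl(\|y-x\|+t\bigr).
\]
Multiplying by $\lambda_t$ and substituting the explicit value of $\lambda_t$ gives
\[
g(y)-g(x)\;\le\;\frac{\|y-x\|\bigl(\tilde g(x)-g(x)\bigr)}{\|y-x\|+t}\;+\;k\|y-x\|.
\]
The quantity $\tilde g(x)-g(x)$ is a fixed finite real number (both functions are real-valued), so letting $t\to+\infty$ makes the first summand vanish and yields $g(y)-g(x)\le k\|y-x\|$. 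Swapping the roles of $x$ and $y$ gives the reverse inequality, hence $|g(y)-g(x)|\le k\|y-x\|$.

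The step I expect to require the most care is the construction of $z_t$ and the verification that $\lambda_t$ really does put $y$ inside $[x,z_t]$; once that is in place the rest is a routine convexity chain. An alternative, more conceptual route would go via recession cones: the inclusion $g\le\tilde g$ gives $\mathrm{epi}\,\tilde g\subset\mathrm{epi}\,g$, so $(\mathrm{epi}\,\tilde g)^\infty\subset(\mathrm{epi}\,g)^\infty$, and the characterization of $k$-Lipschitz continuity via the inclusion $E_k\subset(\mathrm{epi}\,f)^\infty$ used in the proof of Theorem~\ref{th4.1} immediately transfers the Lipschitz constant from $\tilde g$ to $g$. I would prefer the first, elementary argument for the lemma since it avoids any separate discussion of closedness of $\mathrm{epi}\,g$.
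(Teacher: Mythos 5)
Your argument is correct, and it takes a genuinely different route from the paper. The paper first uses the bound $g(x)\le\tilde g(\bar x)+k\|x-\bar x\|$ to get local boundedness from above, hence continuity of the convex function $g$ and closedness of $\mathrm{epi}\,g$; it then passes to recession cones, using the inclusion $\mathrm{epi}\,\tilde g\subset\mathrm{epi}\,g$ together with the characterization of $Q^\infty$ for closed convex $Q$ recalled at the end of Section~\ref{sec1} to conclude $E_k\subset(\mathrm{epi}\,\tilde g)^\infty\subset(\mathrm{epi}\,g)^\infty$, which is exactly the recession-cone criterion for $k$-Lipschitz continuity used in Theorem~\ref{th4.1}. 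Your ray-stretching argument bypasses all of this: the identity $y=(1-\lambda_t)x+\lambda_t z_t$ with $\lambda_t=\|y-x\|/(\|y-x\|+t)$ checks out, $\|z_t-x\|=\|y-x\|+t$ is computed correctly, and since $\tilde g(x)-g(x)$ is a fixed finite number the limit $t\to+\infty$ does kill the error term, giving $g(y)-g(x)\le k\|y-x\|$ directly, with the reverse inequality by symmetry. What the paper's approach buys is uniformity with the recession-cone machinery that runs through Sections~\ref{sec1}--\ref{sec3} (the same inclusion $E_k\subset(\mathrm{epi}\,f)^\infty$ is the engine of Theorem~\ref{th4.1}); what yours buys is a shorter, self-contained proof that needs neither the continuity of $g$ nor the closedness of its epigraph, and in fact would still work if $g$ were only assumed convex and real-valued without any topological preliminaries. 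Both are valid; your elementary version is arguably the cleaner proof of this isolated lemma.
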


\proof Take an arbitrary point $\bar{x} \in X.$ Then $g(x) \le \tilde{g}(x) \le \tilde{g}(\bar{x}) + k\|x-\bar{x}\|$ for all $x \in X$ and, consequently, $g$ is bounded from above on an arbitrary ball $B_\delta(\bar{x}):=\{x \in X \mid \|x - \bar{x}\| \le \delta\}.$ It implies that $g$ is continuous at $\bar{x}.$ Due to the arbitrary choice of $\bar{x}$ the function $g$ is continuous on the whole space $X.$ Hence, the epigraph ${\rm epi}g$ is a closed convex subset of the space $X \times {\mathbb R}$ and ${\rm epi}\tilde{g} \subset {\rm epi}g.$ These properties imply $({\rm epi}\tilde{g})^\infty \subset ({\rm epi}g)^\infty.$ Since $\tilde{g}$ is Lipschitz continuous on $X$ with a Lipschitz constant $k > 0,$ then $E_k:=\{(x,\alpha) \in X \times {\mathbb R} \mid k\|x\| \le \alpha\} \subset   ({\rm epi}\tilde{g})^\infty$ and, consequently, $E_k \subset ({\rm epi}g)^\infty.$ It means that $g$ is Lipschitz continuous on $X$ with Lipschitz constant~$k.$

\hfill $\Box$

\begin{theorem}\label{th4.4}
Let $f:X \to {\mathbb R}$ be a real-valued functions defined on a normed space $X$ and let $\Sigma_{Lip}^+(f)$ be the subfamily  of all minimal convex majorants of $f$ that are Lipschitz continuous on $X.$ Then, $\Sigma_{Lip}^+(f) \ne \varnothing$ if and only if $f$ is Lipschitz bounded from above.
\end{theorem}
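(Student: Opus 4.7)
The necessity direction is immediate from the definitions: if $g \in \Sigma_{Lip}^+(f)$, then $g$ is a real-valued Lipschitz continuous majorant of $f$, so $f$ is Lipschitz bounded from above.

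For the sufficiency, the plan is to manufacture one explicit convex Lipschitz majorant of $f$, then descend to a minimal convex majorant via Theorem~\ref{th3.2}, and finally transfer Lipschitz continuity downward using Lemma~\ref{l4.1}. Concretely, assume $f$ is $k$-Lipschitz bounded from above, so there exists $\tilde{g}:X\to\mathbb{R}$ that is Lipschitz on $X$ with constant $k$ and satisfies $f(x)\le\tilde{g}(x)$ for all $x\in X$. Fix any base point $x_0\in X$ and set
$$
q(x):=\tilde{g}(x_0)+k\|x-x_0\|.
$$
Then $q$ is convex (an affine function plus a scaled norm), is Lipschitz on $X$ with constant $k$, and dominates $\tilde{g}$ by the Lipschitz property $\tilde{g}(x)\le\tilde{g}(x_0)+k\|x-x_0\|$; hence $q$ is a convex Lipschitz majorant of $f$.

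Applying the second assertion of Theorem~\ref{th3.2} to the convex majorant $q$ produces a minimal convex majorant $g\in\Sigma^+(f)$ with $g(x)\le q(x)$ for all $x\in X$. Since $g$ is convex and bounded above by the real-valued Lipschitz function $q$ with constant $k$, Lemma~\ref{l4.1} forces $g$ to be real-valued and Lipschitz continuous on $X$ with Lipschitz constant at most $k$. Therefore $g\in\Sigma_{Lip}^+(f)$, which establishes the nonemptiness.

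The only delicate point is that Lipschitz boundedness from above merely yields some Lipschitz majorant $\tilde{g}$, which need not be convex; the short convexification step replacing $\tilde{g}$ by the translated norm $q$ sidesteps this while preserving the Lipschitz constant. Once a convex Lipschitz majorant is in hand, the rest is a direct appeal to Theorem~\ref{th3.2} and Lemma~\ref{l4.1}.
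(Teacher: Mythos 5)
Your proof is correct and follows essentially the same route as the paper's: produce a convex Lipschitz majorant of $f$, descend to a minimal convex majorant via the second assertion of Theorem~\ref{th3.2}, and push the Lipschitz property down with Lemma~\ref{l4.1}. The only divergence is how that convex Lipschitz majorant is obtained --- the paper applies Theorem~\ref{th4.1} to the (possibly nonconvex) Lipschitz majorant of $f$ and takes one of its minimal convex majorants, whereas you build one explicitly as the translated norm cone $q(x)=\tilde{g}(x_0)+k\|x-x_0\|$ --- which makes your argument slightly more self-contained, the only nitpick being that the real-valuedness of the resulting $g$ comes from the sandwich $f\le g\le q$ rather than from Lemma~\ref{l4.1} itself.
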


\proof Prove the ``only if'' part. Let $g: X \to {\mathbb R}$ be a Lipschitz continuous function such that $f(x) \le g(x)$ for all $x \in X.$ Through Theorem \ref{th4.1} we have that $\Sigma_{Lip}^+(g) =\Sigma^+(g) \ne \varnothing.$ By Theorem \ref{th3.2} for each  $\tilde{g} \in \Sigma_{Lip}^+(g),$ as for a convex majorant of $f,$  there exists $\hat{g} \in \Sigma^+(f)$ such that $\hat{g}(x) \le \tilde{g}(x)$ for all $x \in X.$ Inasmuch as $\tilde{g}$ is Lipschitz continuous on $X,$ through Lemma \ref{l4.1} the function $\hat{g}$ is also Lipschitz continuous on $X$ and, consequently, $\hat{g} \in \Sigma_{Lip}^+(f).$

The proof of the ``if'' part is straightforward. \hfill $\Box$

\begin{theorem}\label{th4.5k}
Let $f:X \to {\mathbb R}$ be a real-valued function defined on a real normed space $X.$
If the function $f$ is $\bar{k}$-Lipschitz bounded from above on $X$, then
\begin{equation}\label{e4.9}
\Sigma_{Lip}^+(f) = \bigcup\limits_{k \ge \bar{k}}\Sigma^+(f_k).
\end{equation}
and, moreover, for all $k_1,k_2$ such that $\bar{k} \le k_1 \le k_2$ one has $\Sigma^+(f_{k_1}) \subseteq \Sigma^+(f_{k_2}).$

Here $f_k$ stands for the Pasch-Hausdorff envelope of $f$ for the value $k.$
\end{theorem}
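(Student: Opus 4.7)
The plan is to prove the set equality in both directions, using the characterization of $f_k$ (from Proposition~\ref{pr4.1}) as the least Lipschitz-continuous majorant of $f$ with Lipschitz constant $k$, and then deduce the monotonicity as a corollary of the same circle of ideas.

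First I would record two preliminary observations that will be used repeatedly. Since $f$ is $\bar{k}$-Lipschitz bounded from above, Proposition~\ref{pr4.1} guarantees that, for every $k\ge\bar{k}$, the Pasch--Hausdorff envelope $f_k$ takes only finite values and is Lipschitz continuous on $X$ with Lipschitz constant $k$. Hence, by Theorem~\ref{th4.1}, $\Sigma^+(f_k)=\Sigma^+_{Lip}(f_k)$ and every $g\in \Sigma^+(f_k)$ is Lipschitz continuous on $X$ with constant not exceeding $k$.

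For the inclusion $\bigcup_{k\ge\bar{k}}\Sigma^+(f_k)\subseteq \Sigma^+_{Lip}(f)$, I would fix $k\ge\bar{k}$ and $g\in \Sigma^+(f_k)$. The chain $f\le f_k\le g$ shows that $g$ is a convex (Lipschitz) majorant of $f$. To establish minimality for $f$, suppose $g'$ is a convex function with $f\le g'\le g$. Lemma~\ref{l4.1} gives that $g'$ is Lipschitz continuous on $X$ with constant $k$; since $f_k$ is the least such majorant of $f$ (Proposition~\ref{pr4.1}), it follows that $f_k\le g'\le g$, and then minimality of $g$ for $f_k$ forces $g'=g$. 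Thus $g\in \Sigma^+_{Lip}(f)$.

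For the reverse inclusion $\Sigma^+_{Lip}(f)\subseteq\bigcup_{k\ge\bar{k}}\Sigma^+(f_k)$, I would take $g\in \Sigma^+_{Lip}(f)$ with some Lipschitz constant $k_0$ and set $k:=\max\{\bar{k},k_0\}$. Then $g$ is a Lipschitz-with-constant-$k$ majorant of $f$, so by the least-majorant property of $f_k$ we have $f_k\le g$, i.e.\ $g$ is a convex majorant of $f_k$. If $g'$ is any convex function with $f_k\le g'\le g$, then $f\le g'\le g$, and minimality of $g$ in $\Sigma^+(f)$ yields $g'=g$. Therefore $g\in\Sigma^+(f_k)$, proving \eqref{e4.9}.

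Finally, for the monotonicity statement with $\bar{k}\le k_1\le k_2$: take $g\in \Sigma^+(f_{k_1})$; then $g$ is Lipschitz with constant at most $k_1\le k_2$, and since $f_{k_2}\le f_{k_1}\le g$, it is a convex majorant of $f_{k_2}$. Given any convex $g'$ with $f_{k_2}\le g'\le g$, Lemma~\ref{l4.1} says $g'$ is Lipschitz with constant $k_1$; combining $f\le f_{k_2}\le g'$ with the least-majorant property of $f_{k_1}$ gives $f_{k_1}\le g'\le g$, so minimality of $g$ in $\Sigma^+(f_{k_1})$ forces $g'=g$. Hence $g\in \Sigma^+(f_{k_2})$. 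The main obstacle throughout is the bookkeeping of Lipschitz constants when comparing minimality with respect to $f$ versus $f_k$; Lemma~\ref{l4.1} together with the Pasch--Hausdorff least-majorant property is exactly what bridges these two notions.
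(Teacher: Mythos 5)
Your proof is correct and uses essentially the same ingredients as the paper's: Lemma~\ref{l4.1} to push the Lipschitz constant down to any convex function squeezed between $f$ (or $f_{k_2}$) and $g$, and the least-Lipschitz-majorant property of the Pasch--Hausdorff envelope from Proposition~\ref{pr4.1} to convert majorization of $f$ into majorization of $f_k$. The only cosmetic difference is that you verify minimality directly against an arbitrary intermediate convex majorant $g'$, whereas the paper invokes Theorem~\ref{th3.2} to select a minimal majorant below $g$ and then shows it coincides with $g$; the logic is the same.
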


\proof  Let us prove first that $\Sigma^+(f_{k}) \subseteq \Sigma^+_{Lip}(f)$ for all $k \ge \bar{k}.$

Take an arbitrary $k \ge \bar{k}.$ By Theorems \ref{th1.1} and \ref{th3.2} for each $g \in \Sigma^+(f_k)$ there exists a minimal convex majorant $\tilde{g} \in \Sigma^+(f)$ such that $\tilde{g}(x) \le g(x)$ for all $x \in X.$ Since both the functions, $g$ and $\tilde{g}$, are convex and $g$ is Lipschitz continuous on $X$ with Lipschitz constant $k$, by Lemma \ref{l4.1} $\tilde{g}$ is Lipschitz continuous on $X$ as well with the same Lipschitz constant $k.$ Thus, $\tilde{g}$ is a minimal $k$-Lipschitz continuous convex majorant of $f$ and, consequently, $\tilde{g} \in \Sigma^+_{Lip}(f).$  Since $f_k$ is the least of $k$-Lipschitz continuous majorant of  $f$, we conclude that $f_k(x) \le \tilde{g}(x)\,\,\forall\,\,x \in X.$ Now, taking into account, that $g \in \Sigma^+(f_k)$ and $\tilde{g}(x) \le g(x)\,\,\forall\,\,x \in X$, we get the equality $\tilde{g} = g.$  This proves that $\Sigma^+(f_k) \subseteq \Sigma^+_{Lip}(f).$  Due to an arbitrary choice of $k,\,k \ge \bar{k},$ we get $\bigcup\limits_{k \ge \bar{k}}\Sigma^+(f_k) \subset \Sigma^+_{Lip}(f).$

To prove the converse inclusion, choose $g \in \Sigma_{Lip}^+(f)$ and let  $k$ be a Lipschitz constant of $g.$ Without loss of generality we can suppose that $k \ge \bar{k}.$ The function $g$ is a convex majorant of $f_k$ and, consequently, there exists $\hat{g} \in \Sigma^+(f_k)$ such that $\hat{g}(x) \le g(x)\,\,\forall\,\,x \in X.$ Since $\hat{g}$ is a convex majorant of $f$ and $g \in \Sigma_{Lip}^+(f) \subseteq \Sigma^+(f),$ we get that $g = \hat{g}.$ It implies that $g \in \Sigma^+(f_k).$ Thus, $\Sigma^+_{Lip}(f) \subset \bigcup\limits_{k \ge \bar{k}}\Sigma^+(f_k).$ It completes the proof of the equality \eqref{e4.9}.

For arbitrary $k_1,k_2$ such that $\bar{k} \le k_1 \le k_2$ and any $g_1 \in \Sigma^+(f_{k_1})$ we have $f(x) \le f_{k_2}(x) \le f_{k_1}(x) \le g_1(x)\,\,\forall\,\,x \in X.$ We conclude from these inequalities that for each $g_1 \in \Sigma^+(f_{k_1})$ there exists $g_2 \in \Sigma^+(f_{k_2})$ such that $g_2(x) \le g_1(x)\,\,\forall x \in X.$ Since through the equality \eqref{e4.9} $g_1 \in \Sigma^+_{Lip}(f)$, we get from the last inequality that $g_1 = g_2$ and, consequently, $g_1 \in \Sigma^+(f_{k_2}).$ Thus, $\Sigma^+(f_{k_1}) \subseteq \Sigma^+(f_{k_2}).$ \hfill $\Box$

\begin{theorem}\label{th4.5}
Let $f:X \to {\mathbb R}$ be a real-valued function defined on a real normed space $X.$
If the function $f$ is Lipschitz bounded from above, then the upper semicontinuous closure of $f$ admits the representation
\begin{equation}\label{e4.10}
({\rm cl}f)^\uparrow(x) = \inf\limits_{g \in \Sigma_{Lip}^+(f)}g(x)\,\,\text{for all}\,\,x \in X.
\end{equation}
\end{theorem}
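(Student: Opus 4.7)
The plan is to chain together the three preceding results: Proposition~\ref{pr4.1}, Theorem~\ref{th3.2}, and Theorem~\ref{th4.5k}. All the heavy lifting is already done; the statement should follow by a routine swap of infima.

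First, I would invoke Proposition~\ref{pr4.1}. Since $f$ is Lipschitz bounded from above, there is some $\bar{k}>0$ with $f_{\bar{k}}(x) < +\infty$ for all $x \in X$, and the proposition gives
\begin{equation*}
({\rm cl}f)^\uparrow(x) \;=\; \inf_{k \ge \bar{k}} f_k(x) \quad \text{for all } x \in X,
\end{equation*}
where $f_k$ is the Pasch--Hausdorff envelope of $f$ for value $k$. This reduces the task to expressing $\inf_{k \ge \bar{k}} f_k(x)$ as the pointwise infimum of the family $\Sigma^+_{Lip}(f)$.

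Second, each $f_k$ (for $k \ge \bar{k}$) is a real-valued convex function, hence $l$-proper, so Theorem~\ref{th3.2} yields
\begin{equation*}
f_k(x) \;=\; \min_{g \in \Sigma^+(f_k)} g(x) \quad \text{for all } x \in X.
\end{equation*}
Substituting this into the previous display gives
\begin{equation*}
({\rm cl}f)^\uparrow(x) \;=\; \inf_{k \ge \bar{k}}\; \inf_{g \in \Sigma^+(f_k)} g(x).
\end{equation*}

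Finally, I would apply Theorem~\ref{th4.5k}, which provides the key identity
$\Sigma^+_{Lip}(f) = \bigcup_{k \ge \bar{k}} \Sigma^+(f_k)$. Exchanging the two infima and using this union identity yields
\begin{equation*}
\inf_{k \ge \bar{k}}\; \inf_{g \in \Sigma^+(f_k)} g(x) \;=\; \inf_{g \,\in\, \bigcup_{k \ge \bar{k}} \Sigma^+(f_k)} g(x) \;=\; \inf_{g \in \Sigma^+_{Lip}(f)} g(x),
\end{equation*}
which combined with the chain above produces the desired equality \eqref{e4.10}.

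There is no real obstacle here: the only point requiring a line of care is justifying that $\Sigma^+(f_k)$ is nonempty so the inner infimum is actually attained and equal to $f_k(x)$ --- but since $f_k$ is real-valued and convex for $k \ge \bar{k}$, this is immediate from Theorem~\ref{th3.2}. The nonemptiness of $\Sigma^+_{Lip}(f)$ (needed so the right-hand side is not $\inf\varnothing = +\infty$) is guaranteed by Theorem~\ref{th4.4} under the Lipschitz boundedness hypothesis.
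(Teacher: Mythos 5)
Your proof is correct and follows essentially the same route as the paper's own argument: Proposition~\ref{pr4.1} for the Pasch--Hausdorff representation of $({\rm cl}f)^\uparrow$, Theorem~\ref{th3.2} applied to each $f_k$, and the union identity \eqref{e4.9} of Theorem~\ref{th4.5k} to collapse the double infimum. One small inaccuracy worth fixing: the Pasch--Hausdorff envelope $f_k$ need not be convex (it is a supremum of concave functions of $x$); it is merely real-valued and Lipschitz, hence $l$-proper, which is all that Theorem~\ref{th3.2} actually requires.
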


\proof By Proposition \ref{pr4.1} the upper semicontinuous closure of the function $f$ admits the presentation
\begin{equation}\label{e4.4}
({\rm cl}f)^\uparrow (x) = \inf\limits_{k \ge \bar{k}} f_k(x)\,\,\forall\,\,x \in X,
\end{equation}
where $f_k$ is the Pasch-Hausdorff envelope of $f$ for the value $k.$
In turn, by Theorem~\ref{th3.2} we can represent each function $f_k$ as follows
\begin{equation}\label{e4.5}
f_k(x) = \min\limits_{g \in \Sigma^+(f_k)}g(x)\,\,\text{for all}\,\,x \in X,
\end{equation}
where $\Sigma^+(f_k)$ is the family of all minimal convex majorants of the function $f_k.$

Then, it follows from \eqref{e4.4},\eqref{e4.5}, and \eqref{e4.9} that
\begin{equation*}
({\rm cl}f)^\uparrow(x)=\inf\limits_{k \ge \bar{k}}\min\limits_{g \in \Sigma^+(f_k)}g(x)= \inf\limits_{g\, \in \bigcup\limits_{k \ge \bar{k}}\Sigma^+(f_k)}g(x) = \inf\limits_{g \in \Sigma_{Lip}^+(f)}g(x)\,\,\text{for all}\,\,x \in X.
\end{equation*}
\hfill $\Box$

Now we are ready to prove the central result of this section.

\begin{theorem}\label{th4.7}
Let $f:X \to {\mathbb R}$ be a real-valued function defined on a real normed space $X.$

$(i)$ For the function $f$ to be upper semicontinuous and Lipschitz bounded from above on $X$, it is necessary and sufficient that the family $\Sigma_{Lip}^+(f)$ be nonempty and $f$ admit the following lower envelope representation
\begin{equation}\label{e4.3k}
f(x) = \inf\limits_{g \in \Sigma_{Lip}^+(f)}g(x)\,\,\text{for all}\,\,x \in X.
\end{equation}

Here $\Sigma_{Lip}^+(f)$ stands for the subfamily of all minimal convex majorants which are Lipschitz continuous on $X.$

$(ii)$ For the function $f$ to be lower semicontinuous  and Lipschitz bounded from below on $X,$ it is necessary and sufficient that the family $\Sigma_{Lip}^-(f)$ be nonempty and $f$ admit the following upper envelope representation
\begin{equation}\label{e4.4a}
f(x) = \sup\limits_{h \in \Sigma_{Lip}^-(f)}h(x)\,\,\text{for all}\,\,x \in X.
\end{equation}
Here $\Sigma_{Lip}^-(f)$ stands for the subfamily of all maximal concave minorants which are Lipschitz continuous on $X.$
\end{theorem}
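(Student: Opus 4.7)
The plan is to obtain Theorem~\ref{th4.7} as essentially a corollary of the two preceding results, Theorem~\ref{th4.4} and Theorem~\ref{th4.5}, combined with the elementary fact that the pointwise infimum of a family of continuous functions is upper semicontinuous. Part~(ii) will then be reduced to part~(i) by the standard symmetry $f \leftrightarrow -f$, so essentially all the content is in~(i).

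For the necessity direction of~(i), I would argue as follows. Suppose $f$ is upper semicontinuous and Lipschitz bounded from above. Theorem~\ref{th4.4} immediately gives $\Sigma_{Lip}^+(f) \ne \varnothing$. Theorem~\ref{th4.5} then yields the representation
\[
({\rm cl}f)^\uparrow(x) \;=\; \inf_{g \in \Sigma_{Lip}^+(f)} g(x) \qquad \text{for all } x \in X.
\]
Because $f$ is upper semicontinuous on $X$, we have $({\rm cl}f)^\uparrow = f$, and substituting this identity into the previous display produces precisely \eqref{e4.3k}.

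For the sufficiency direction of~(i), assume $\Sigma_{Lip}^+(f) \ne \varnothing$ and that \eqref{e4.3k} holds. The mere nonemptiness of $\Sigma_{Lip}^+(f)$ furnishes a Lipschitz continuous majorant of $f$, so $f$ is Lipschitz bounded from above by definition. Upper semicontinuity then follows from \eqref{e4.3k}: each $g \in \Sigma_{Lip}^+(f)$ is (Lipschitz) continuous on $X$, hence upper semicontinuous, and the pointwise infimum of any family of upper semicontinuous functions is upper semicontinuous. Thus $f$ is u.s.c., completing part~(i).

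Part~(ii) is obtained by applying part~(i) to the function $-f$, using that $-f$ is Lipschitz bounded from above iff $f$ is Lipschitz bounded from below, that $-f$ is upper semicontinuous iff $f$ is lower semicontinuous, and that $g \mapsto -g$ is a bijection between $\Sigma_{Lip}^+(-f)$ and $\Sigma_{Lip}^-(f)$ converting lower envelopes into upper envelopes; this last bijection is exactly the content of Theorem~\ref{th3.3} combined with the Lipschitz transfer already exploited in Theorem~\ref{th4.1}. There is really no ``hard step'' here: the entire technical burden has been discharged earlier, in the proof that $({\rm cl}f)^\uparrow$ coincides with the lower envelope of $\Sigma_{Lip}^+(f)$ (Theorem~\ref{th4.5}), which in turn rested on the Pasch--Hausdorff envelope analysis of Proposition~\ref{pr4.1}. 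The only thing one must be careful about is not to claim $g$ is continuous without justification; this is why Lemma~\ref{l4.1} and Theorem~\ref{th4.1} were inserted earlier, and here we simply invoke that members of $\Sigma_{Lip}^+(f)$ are by definition Lipschitz continuous.
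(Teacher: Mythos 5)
Your proof is correct and follows essentially the same route as the paper: the ``only if'' direction combines Theorem~\ref{th4.4} with the identity $f=({\rm cl}f)^\uparrow$ and the envelope formula of Theorem~\ref{th4.5}, while the ``if'' direction uses that the infimum of the (Lipschitz) continuous members of $\Sigma_{Lip}^+(f)$ is upper semicontinuous and that nonemptiness of $\Sigma_{Lip}^+(f)$ yields Lipschitz boundedness from above. Your explicit symmetry argument for part~(ii) is also exactly what the paper leaves implicit.
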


\proof Prove the statement $(i).$ The ``if'' part comes directly from the representation \eqref{e4.3k} and continuity of functions of the family $\Sigma_{Lip}^+(f).$

To prove the ``only if'' part we observe that, since $f$ is upper semicontinuous on $X,$ one has $f = ({\rm cl}f)^\uparrow.$ Thus, the equality \eqref{e4.3k} follows immediately from the equality~\eqref{e4.9}.

\hfill $\Box$

For p.h. functions the requirement of Lipschitz boundedness from above in the statement $(i)$ of the last theorem can be omitted, since each upper semicontinuous p.h. function is bounded from above by the Lipschitz continuous function $x \to k\|x\|$ with a suitable number $k > 0.$ Observe also, that each minimal convex majorant of a p.h. function is sublinear and that each continuous sublinear function $\varphi:X \to {\mathbb R}$ is Lipschitz continuous. Consequently, the family $\Sigma^+_{Lip}(p)$ corresponding to a p.h. function $p: X \to {\mathbb R}$ consists exclusively of continuous sublinear functions.
To emphasize these peculiarities  we will denote the family of all minimal continuous sublinear majorants of a p.h. function $p: X \to {\mathbb R}$ by the symbol $S^+_C(p)$ instead of $\Sigma^+_{Lip}(p).$

Taking into account  these observations we get from Theorem \ref{th4.7} the following characterization of upper (lower) semicontinuous p.h. functions.

\begin{theorem}\label{th4.7a}
Let $X$ be a real normed space.

$(i)$ A real-valued p.h. function $p:X \to {\mathbb R}$ is upper semicontinuous on $X$ if and only if $S_{C}^+(p) \ne \varnothing$ and
\begin{equation}\label{e4.7}
p(x) = \inf\limits_{\varphi \in S_{C}^+(p)}\varphi(x)\,\,\text{for all}\,\,x \in X.
\end{equation}
Here $S_{C}^+(p)$ stands for the subfamily of all maximal continuous sublinear majorants of~$p.$

$(ii)$ A real-valued p.h. function $p:X \to {\mathbb R}$ is lower semicontinuous on $X$ if and only if $S_{C}^-(p) \ne \varnothing$ and
\begin{equation}\label{e4.7a}
p(x) = \sup\limits_{\psi \in S_{C}^-(p)}\psi(x)\,\,\text{for all}\,\,x \in X.
\end{equation}
Here $S_{C}^-(p)$ stands for the subfamily of all maximal continuous superlinear minorants of~$p.$
\end{theorem}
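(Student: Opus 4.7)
The plan is to deduce Theorem \ref{th4.7a} as a specialization of Theorem \ref{th4.7} to positively homogeneous functions, using two structural facts already recorded. First, by Proposition \ref{pr4.2}, an upper semicontinuous p.h. function $p:X\to{\mathbb R}$ is automatically majorized by $x\mapsto \bar k\|x\|$ for some $\bar k>0$, so it is Lipschitz bounded from above on $X$; symmetrically for lower semicontinuous p.h. functions. Second, by Theorem \ref{th3.7} every minimal convex majorant of the $l$-proper p.h. function $p$ is sublinear, so that the subfamily $\Sigma_{Lip}^+(p)$ consists of the minimal sublinear majorants of $p$ that are Lipschitz continuous on $X$. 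Since a Lipschitz continuous function is continuous, and conversely every continuous sublinear function $\varphi:X\to{\mathbb R}$ on a normed space is Lipschitz continuous (it is bounded on the closed unit ball by some $k>0$, and sublinearity gives $\varphi(x)-\varphi(y)\le \varphi(x-y)\le k\|x-y\|$), the two families $\Sigma_{Lip}^+(p)$ and $S_C^+(p)$ coincide.

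With these identifications in hand, part $(i)$ follows at once. For the ``only if'' direction I would fix an upper semicontinuous p.h. function $p$, invoke Proposition \ref{pr4.2} to secure Lipschitz boundedness from above, and then apply Theorem \ref{th4.7}(i) to obtain $\Sigma_{Lip}^+(p)\ne\varnothing$ together with the lower envelope representation $p(x)=\inf_{g\in\Sigma_{Lip}^+(p)}g(x)$; translating $\Sigma_{Lip}^+(p)$ into $S_C^+(p)$ via the identification above yields \eqref{e4.7}. The ``if'' direction is immediate: the pointwise infimum of any nonempty family of continuous functions is upper semicontinuous, so if $p=\inf_{\varphi\in S_C^+(p)}\varphi$ with $S_C^+(p)\ne\varnothing$, then $p$ is upper semicontinuous on $X$.

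Part $(ii)$ I would handle by the symmetry $f\leftrightarrow -f$: a p.h. function $p$ is lower semicontinuous on $X$ if and only if $-p$ is an upper semicontinuous p.h. function, the maximal concave minorants of $p$ are exactly the negatives of the minimal convex majorants of $-p$, and superlinearity corresponds under negation to sublinearity. Applying part $(i)$ to $-p$ and reversing signs then yields $S_C^-(p)\ne\varnothing$ together with the upper envelope representation \eqref{e4.7a}. Alternatively, one can invoke Theorem \ref{th4.7}(ii) together with the concave counterpart of Theorem \ref{th3.7} (maximal concave minorants of $u$-proper p.h. functions are superlinear).

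The only delicate point is the identification $\Sigma_{Lip}^+(p)=S_C^+(p)$; all the remaining steps are direct invocations of previously proved theorems. The substance of the argument is that positive homogeneity collapses the distinction between ``Lipschitz continuous convex majorant of $p$'' and ``continuous sublinear majorant of $p$'', so the general Lipschitz representation in Theorem \ref{th4.7} specializes cleanly to the Demyanov--Rubinov setting, and the extraneous hypothesis of Lipschitz boundedness from above becomes automatic under upper semicontinuity.
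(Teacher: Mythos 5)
Your proposal matches the paper's own derivation: the paper likewise obtains Theorem \ref{th4.7a} as a specialization of Theorem \ref{th4.7}, using the facts that an upper semicontinuous p.h.\ function is majorized by $x\mapsto k\|x\|$ (hence Lipschitz bounded from above), that minimal convex majorants of p.h.\ functions are sublinear (Theorem \ref{th3.7}), and that continuous sublinear functions are Lipschitz, so that $\Sigma^+_{Lip}(p)=S^+_C(p)$. Your write-up is correct and, if anything, slightly more explicit than the paper about the family identification and the symmetry argument for part $(ii)$.
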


It follows from \eqref{e4.7} and \eqref{e4.7a} that in the Demyanov-Rubinov terminology $S_{C}^+(p)$ and $S_{C}^-(p)$ are, respectively, an exhaustive family of upper convex approximations and an exhaustive family of lower concave approximations of a p.h. functions $p.$

\begin{example}\label{ex4.1}
Let $f: X \to {\mathbb R}$ be a continuous and Lipschitz bounded function. Then, by Theorem \ref{th4.4} both the families $\Sigma^-_{Lip}(f)$ and $\Sigma^+_{Lip}(f)$ are nonempty. Suppose that there exist two continuous affine functions $a_1: X \to {\mathbb R}$ and $a_2: X \to {\mathbb R}$ such that $a_1 \in \Sigma^-_{Lip}(f)$ and $a_2 \in \Sigma^+_{Lip}(f).$ Since $a_1(x) \le f(x) \le a_2(x)\,\,\forall\,\,x \in X,$ we conclude that in this case $f = a_1 = a_2.$ Thus, both the families $\Sigma^-_{Lip}(f)$ and $\Sigma^+_{Lip}(f)$ can simultaneously contain continuous affine functions if and only if $f$ itself is a continuous affine one, and, moreover, in this case $\Sigma^-_{Lip}(f) = \Sigma^+_{Lip}(f) = \{f\}.$
\end{example}

\begin{example}\label{ex4.2}
Let $f: X \to {\mathbb R}$ be a lower semicontinuous convex function function. Since $f$ is bounded from above by a continuous affine function (see, for instance, \cite{ET76}), through Theorem \ref{th4.4} the family $\Sigma^-_{Lip}(f)$ is nonempty and, moreover, by the sandwich theorem \cite{BorVan}  each $g \in \Sigma^-_{Lip}(f)$ is a continuous affine function. Using the Fenchel conjugate  function $f^*(x^*) := \sup\limits_{x \in X}(x^*(x) - f(x))$ ($X^*$ is the dual space of continuous linear functions on $X$) \cite{ET76,Rock,HULem1,BorVan}, we can represent the family $\Sigma^-_{Lip}(f)$  as follows: $\Sigma^-_{Lip}(f) = \{x \to x^*(x) -f^*(x^*) \mid x^* \in X^*\}.$ Besides, the second Fenchel conjugate (biconjugate) $f^{**}$ of $f$ determined by the equality $f^{**}(x) := \sup\limits_{x^* \in X^*}(x^*(x) - f^*(x^*))$ for all $x \in X,$ is nothing more than the upper envelope of $\Sigma^-_{Lip}(f),$ that is,  $f^{**}(x) = \sup\limits_{h \in \Sigma^-_{Lip}(f)}h(x),$ $x \in X.$
\end{example}

Thus, the mapping $\Sigma^-_{Lip}: f \rightarrow \Sigma^-_{Lip}(f)$ can be considered as an extension of the Legendre-Fenchel transformation $f \to f^*$ from the class of lower semicontinuous convex functions to the much more larger collections of lower semicontinuous and Lipschitz bounded from below functions.

\section{Demyanov--Rubinov sub(super)differentials based on directional derivatives} \label{sec4}

The families $\Sigma^-_{Lip}(f)$ and $\Sigma^+_{Lip}(f)$ are global characteristics of lower and upper semicontinuous functions. In this section we introduce their local analogs called the Demyanov-Rubinov subdifferential and the Demyanov-Rubinov superdifferential, respectively. To develop these local constructions we follow one of the conventional approaches based on directional derivatives.

Let $f: X \to {\overline{\mathbb R}}$ be an extended real-valued function defined on a real normed space~$X.$

By $f^\downarrow(\bar{x}\,|\,\cdot)$ and $f^\uparrow(\bar{x}\,|\,\cdot)$ we denote, respectively, the lower (radial) directional derivative and the upper (radial) directional derivative of the function $f$ at a point $\bar{x} \in {\rm int}({\rm dom}f).$
Recall that the values of $f^\downarrow(\bar{x}\,|\,\cdot)$ and $f^\uparrow(\bar{x}\,|\,\cdot)$ at $d \in X$ are defined by
$$
f^\downarrow(\bar{x}\,|\,d) = \liminf\limits_{t \to 0+}\frac{f(\bar{x} + td) - f(\bar{x})}{t}
$$
and
$$
f^\uparrow(\bar{x}\,|\,d) = \limsup\limits_{t \to 0+}\frac{f(\bar{x} + td) - f(\bar{x})}{t}.
$$
It is well-known that $f^\downarrow(\bar{x}\,|\,\cdot)$ and $f^\uparrow(\bar{x}\,|\,\cdot)$ are positively homogeneous functions, which can take infinite values in general. But in what follows we will assume that
$f^\downarrow(\bar{x}\,|\,\cdot)$ is bounded from below on the unit ball $B := \{x \in X  \mid \|x\| \le 1\},$ while $f^\uparrow(\bar{x}\,|\,\cdot)$ is bounded from above on $B.$ Since $f^\downarrow(\bar{x}\,|\,d) \le f^\uparrow(\bar{x}\,|\,d)$ for all $d \in X,$ these assumptions imply that both $f^\downarrow(\bar{x}\,|\,\cdot)$ and $f^\uparrow(\bar{x}\,|\,\cdot)$ are bounded on $B$ and, consequently, take  only finite values.

Using the lower and upper directional derivatives $f^\downarrow(\bar{x}\,|\,\cdot)$ and $f^\uparrow(\bar{x}\,|\,\cdot)$, we introduce the following four notions:

1) \textit{the lower Demyanov--Rubinov sub\-dif\-ferential} (or, for short, \textit{the lower \linebreak $DR$-sub\-dif\-ferential)} of $f$ at $\bar{x}$, denoted by $\underline{\partial}^-_{DR}f(\bar{x})$, that is defined by the equality $\underline{\partial}^-_{DR}f(\bar{x}) := S^-_{C}(f^\downarrow(\bar{x}\,|\,\cdot))$;

2) \textit{the lower $DR$-superdifferential of $f$ at $\bar{x}$,} denoted by $\underline{\partial}^+_{DR}f(\bar{x})$, that is defined by the equality $\underline{\partial}^+_{DR}f(\bar{x}) := S^+_{C}(f^\downarrow(\bar{x}\,|\,\cdot))$;

3) \textit{the upper $DR$-subdifferential of $f$ at $\bar{x}$,} denoted by $\bar{\partial}^-_{DR}f(\bar{x})$, that is defined by the equality $\bar{\partial}^-_{DR}f(\bar{x}) := S^-_{C}(f^\uparrow(\bar{x}\,|\,\cdot))$;

4) \textit{the upper $DR$-superdifferential of $f$ at $\bar{x}$,} denoted by $\bar{\partial}^+_{DR}f(\bar{x})$, that is defined by the equality $\bar{\partial}^+_{DR}f(\bar{x}) := S^+_{C}(f^\uparrow(\bar{x}\,|\,\cdot))$.

When a function $f$ is directionally differentiable at $\bar{x}$ (this means that $f^\downarrow(\bar{x}\,|\,h) = f^\uparrow(\bar{x}\,|\,d) =: f'(\bar{x}\,|\,d)$ for all $d \in X$), we refer to  the coinciding sets $\underline{\partial}^-_{DR}f(\bar{x}) = \bar{\partial}^-_{DR}f(\bar{x})$ and $\underline{\partial}^+_{DR}f(\bar{x}) = \bar{\partial}^+_{DR}f(\bar{x})$ as
\textit{the $DR$-subdifferential of a function $f$ at $\bar{x}$}  and \textit{the $DR$-superdifferential of a function $f$ at $\bar{x}$}, and will denote them by ${\partial}^+_{DR}f(\bar{x})$ and ${\partial}^+_{DR}f(\bar{x}),$  respectively.

\vspace{3mm}

Discuss relationships of $DR$-sub(super)fifferentials with some other known notions of subdifferentiability.

\vspace{2mm}

$A.$  Let  $f: X \to {\overline{\mathbb R}}$ be a lower semicontinuous convex function and let $\bar{x} \in {\rm int}({\rm dom}f)$. Then the $DR$-subdifferential ${\partial}^-_{DR}f(\bar{x})$ of the function $f$ at $\bar{x}$  is nonempty and coincides with the Fenchel--Moreau subdifferential $\partial f(\bar{x})$ of $f$ at $\bar{x}$ in the sense of convex analysis. This conclusion follows from the fact that due to the Hahn-Banach theorem each maximal concave minorant of a continuous sublinear function is, in fact, a continuous linear one. As for the $DR$-superdifferential ${\partial}^+_{DR}f(\bar{x})$ of the function $f$ at $\bar{x}$ it is an one-element family consisting only of the continuous sublinear function $f'(\bar{x}\,|\,\cdot).$

\vspace{2mm}

$B.$ Let $X^*$ be the topological dual of $X,$ i.e., $X^*$ is the space of continuous linear functionals on $X.$ Suppose, that for a function $f:X \to \overline{\mathbb R}$ and a point $\bar{x} \in {\rm int}({\rm dom}f)$ both  the $DR$-subdifferential ${\partial}^-_{DR}f(\bar{x})$ and the $DR$-superdifferential ${\partial}^+_{DR}f(\bar{x})$ of the function $f$ at $\bar{x}$  are nonempty and there exist $x_1^*, x^*_2 \in X^*$ such that $x_1^* \in {\partial}^-_{DR}f(\bar{x})$ and $x_2^* \in {\partial}^+_{DR}f(\bar{x}).$ Then $x_1^*(d) \le f'(\bar{x}\,|\,d) \le x_2^*(d)$ for all $d \in X$ and, consequently, $f'(\bar{x}\,|\,d) = x_1^*(d) = x_2^*(d)$ for all $d \in X.$ We conclude from  this that both the $DR$-subdifferential ${\partial}^-_{DR}f(\bar{x})$ and the $DR$-superdifferential ${\partial}^+_{DR}f(\bar{x})$ contain continuous linear functionals if and only if the directional derivative $f'(\bar{x}\,|\,\cdot)$ of the function $f$ at the point $\bar{x}$ itself is a continuous linear functional, i.e., if and only if $f$ is G\^{a}teaux differentiable at $\bar{x}$. Thus, if a function $f$ is not G\^{a}teaux differentiable at $\bar{x}$ only one of the families, either ${\partial}^-_{DR}f(\bar{x})$ or ${\partial}^+_{DR}f(\bar{x}),$ may contain continuous linear functionals.

\vspace{2mm}

$C.$ Recall \cite{BonSha} that a function $f: X \to {\overline{\mathbb R}}$ is said to be \textit{directionally differentiable in the Hadamard sense}, or  \textit{Hadamard directionally differentiable} at a point $\bar{x} \in {\rm int}({\rm dom}f)$  if its radial directional derivative $f'(\bar{x}\,|\,d)$ exists for all $d \in X$  and, moreover,
$$
f'(\bar{x}\,|\,d) = \lim\limits_{(t,z) \to (0_+,d)}\frac{f(\bar{x} + tz) - f(\bar{x})}{t}.
$$
To emphasize that the directional derivative under consideration is the one in the Hadamard sense  we will use the notation $f'_H(\bar{x}\,|\,\cdot)$ instead of $f'(\bar{x}\,|\,\cdot).$

It is well-known (see, for instance, \cite{BonSha}) that $f'_H(\bar{x}\,|\,\cdot):X \to {\mathbb R}$ is a continuous function on $X.$

The sets $\partial^-_Hf(\bar{x}):= \{x^* \in X^* \mid x^*(d) \le f'_H(\bar{x}\,|\,d)\,\,\forall\,\,d \in X\}$ and $\partial^+_Hf(\bar{x}):= \{x^* \in X^* \mid x^*(d) \ge f'_H(\bar{x}\,|\,d)\,\,\forall\,\,d \in X\}$ are called, respectively, \textit{the Hadamard (directional) subdifferential} (also called \cite{Penot2,Ioffe17,Mord1} the Dini-Hadamard subdifferential) and \textit{the Hadamard (directional) superdifferential} of the function $f$ at $\bar{x}.$ Since each continuous linear minorant of $f'_H(\bar{x}\,|\,\cdot)$ is a maximal continuous sublinear minorant of it, we have $\partial^-_Hf(\bar{x}) \subseteq \partial^-_{DR}f(\bar{x}).$ Symmetrically, $\partial^+_Hf(\bar{x}) \subseteq \partial^+_{DR}f(\bar{x}).$
In fact, we have
$
\partial^-_Hf(\bar{x}) = \partial^-_{DR}f(\bar{x})\bigcap X^*\,\,\text{and}\,\,\partial^+_Hf(\bar{x}) = \partial^+_{DR}f(\bar{x})\bigcap X^*.
$

Consider the function $f(x_1,\,x_2) = |x_1| - |x_2|.$ It is directionally differentiable in the Hadamard sense at $(0,0)$ with $f'_H((0,0)\,|\,(d_1,\,d_2)) = |d_1| - |d_2|.$ It is easy to verify that both the $DR$-subdifferential and the $DR$-superdifferential of $f$ at $(0,\,0)$  are nonempty with ${\partial}^-_{DR}f((0,\,0)) = \{\alpha x_1 -|x_2|\,|\,-1 \le \alpha \le 1\}$ and ${\partial}^+_{DR}f((0,\,0)) = \{|x_1| - \alpha x_2\,|\,-1 \le \alpha \le 1\}$ At the same time $\partial^-_Hf((0,\,0)) = \varnothing$ and $\partial^+_Hf((0,\,0)) = \varnothing.$

\vspace{3pt}

The next theorem provides some calculus rules for $DR$-super(sub)differentials.

\begin{theorem}\label{th5.1}
Let $f,f_1,f_2$ be such functions for which there exist the $DR$-superdifferentials and the $DR$-subdifferentials at a point $\bar{x}.$  Then

$(i)$ $$\partial^+_{DR}(\lambda f)(\bar{x}) = \begin{cases} \lambda \partial^+_{DR}f(\bar{x}), & \text{when $\lambda > 0,$}\\ \lambda \partial^-_{DR}f(\bar{x}), & \text{when $\lambda < 0$};\end{cases}$$

$(ii)$ for any  $g_1 \in \partial^+_{DR}f_1(\bar{x})$ and any $g_2 \in \partial^+_{DR}f_2(\bar{x})$ there exists $g \in \partial^+_{DR}(f_1 + f_2)(\bar{x})$ such that $g(x) \le g_1(x) + g_2(x)$ for all $x \in X;$

$(iii)$ if $f_1(x) \le f_2(x)$ for all $x$ in a neighbourhood of $\bar{x}$ and $f_1(\bar{x}) = f_2(\bar{x})$, then for each $g_2 \in \partial^+_{DR}f_2(\bar{x})$ there exists $g_1 \in \partial^+_{DR}f_1(\bar{x})$ such that $g_1(x) \le g_2(x)$ for all $x \in X.$
\end{theorem}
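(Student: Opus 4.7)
The plan is to reduce all three assertions to statements about the family $S^+_C(\cdot)$ of minimal continuous sublinear majorants of a positively homogeneous function, and then invoke the structural results already established (in particular, Theorems \ref{th3.2}, \ref{th4.7a}, and Lemma \ref{l4.1}). Throughout I use the elementary identities for the radial directional derivatives: for $\lambda > 0$ one has $(\lambda f)^{\uparrow/\downarrow}(\bar{x}\,|\,d) = \lambda f^{\uparrow/\downarrow}(\bar{x}\,|\,d)$, whereas for $\lambda < 0$ the multiplication by $\lambda$ swaps $\limsup$ and $\liminf$, so $(\lambda f)^{\uparrow}(\bar{x}\,|\,d) = \lambda f^{\downarrow}(\bar{x}\,|\,d)$. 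When $f$ is directionally differentiable at $\bar{x}$, both cases give $(\lambda f)'(\bar{x}\,|\,d) = \lambda f'(\bar{x}\,|\,d)$.

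For part $(i)$ with $\lambda > 0$, I would simply observe that the map $\varphi \mapsto \lambda\varphi$ is an order-preserving bijection between continuous sublinear functions, and it sends sublinear majorants of $p := f'(\bar{x}\,|\,\cdot)$ onto sublinear majorants of $\lambda p$. Since this bijection preserves minimality, $S^+_C(\lambda p) = \lambda S^+_C(p)$, yielding the first case. For $\lambda < 0$, I would write $p = f'(\bar{x}\,|\,\cdot)$ again, note that $\varphi \ge \lambda p$ is equivalent to $\varphi/\lambda \le p$ (inequality reverses), and that $\varphi \mapsto \varphi/\lambda$ sends continuous sublinear functions bijectively onto continuous superlinear functions. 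Because the map reverses the pointwise order, it sends minimal sublinear majorants of $\lambda p$ onto maximal superlinear minorants of $p$; hence $S^+_C(\lambda p) = \lambda S^-_C(p)$, which is the second case.

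For part $(ii)$, directional differentiability gives $(f_1+f_2)'(\bar{x}\,|\,d) = f_1'(\bar{x}\,|\,d) + f_2'(\bar{x}\,|\,d)$. If $g_i \in \partial^+_{DR}f_i(\bar{x}) = S^+_C(f_i'(\bar{x}\,|\,\cdot))$, then each $g_i$ is a continuous sublinear majorant of $f_i'(\bar{x}\,|\,\cdot)$, hence $g_1 + g_2$ is a continuous (and, being the sum of two Lipschitz sublinear functions, Lipschitz) sublinear majorant of $(f_1+f_2)'(\bar{x}\,|\,\cdot)$. Applying Theorem \ref{th3.2} to the p.h.\ function $(f_1+f_2)'(\bar{x}\,|\,\cdot)$ yields a minimal convex majorant $g$ with $g(x) \le g_1(x) + g_2(x)$; by Theorem \ref{th3.7} this $g$ is sublinear, and Lemma \ref{l4.1} ensures that $g$ is Lipschitz continuous (with the same constant as $g_1+g_2$), so $g \in S^+_C((f_1+f_2)'(\bar{x}\,|\,\cdot)) = \partial^+_{DR}(f_1+f_2)(\bar{x})$.

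For part $(iii)$, the hypotheses $f_1 \le f_2$ near $\bar{x}$ and $f_1(\bar{x}) = f_2(\bar{x})$ yield, passing to the limit in the difference quotients, $f_1'(\bar{x}\,|\,d) \le f_2'(\bar{x}\,|\,d)$ for every $d \in X$. Consequently any sublinear majorant of $f_2'(\bar{x}\,|\,\cdot)$ is automatically a sublinear majorant of $f_1'(\bar{x}\,|\,\cdot)$; in particular $g_2$ is such. Invoking Theorem \ref{th3.2} once more (together with Theorem \ref{th3.7} and Lemma \ref{l4.1} to retain sublinearity and Lipschitz continuity) produces a minimal continuous sublinear majorant $g_1$ of $f_1'(\bar{x}\,|\,\cdot)$ with $g_1(x) \le g_2(x)$ for all $x \in X$. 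The step that requires the most care is $(i)$ for negative $\lambda$, where one must verify that the order-reversing bijection $\varphi \mapsto \varphi/\lambda$ really does exchange the roles of $S^+_C$ and $S^-_C$; the remaining parts are direct applications of the machinery of minimal convex majorants developed in Sections~\ref{sec2} and \ref{sec3}.
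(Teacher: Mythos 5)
Your proposal is correct and follows exactly the route the paper intends: the paper gives no detailed argument, stating only that the result "follows immediately from the definitions and the calculus of directional derivatives," and your write-up supplies precisely those details (the order-preserving/order-reversing scaling bijections for $(i)$, and the combination of Theorems \ref{th3.2} and \ref{th3.7} with Lemma \ref{l4.1} for $(ii)$ and $(iii)$). No gaps; your treatment of the $\lambda<0$ case, where the map $\varphi\mapsto\varphi/\lambda$ exchanges $S^+_C$ and $S^-_C$, is the one point the paper leaves entirely implicit and you handle it correctly.
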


The proof follows immediately from the definitions and the calculus of directional derivatives.

In the next two theorems we demonstrate some applications of $DR$-subdifferentials and $DR$-superdifferentials to extremal problems.

\begin{theorem}\label{th5.4}
Assume that the lower directional derivative $f^\downarrow (\bar{x}\,|\cdot )$ of a function $f:X \to {\mathbb R}$ at a point $\bar{x} \in X$ is bounded on the unit ball $B.$ If the point $\bar{x}$ is a local minimizer of $f$ over $X$, then $0_{X^*} \in \underline{\partial}^-_{DR}f(\bar{x})$ and, in addition,  $0_{X^*} \in \partial g$ for all $g \in \underline{\partial}^+_{DR}f(\bar{x})$.

Here $0_{X^*}$ is the null linear functional on $X,$ $\partial g$ is the subdifferential of a sublinear function $g$ in the sense of convex analysis.
\end{theorem}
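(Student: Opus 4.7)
The plan is to reduce both conclusions to the observation that local minimality forces $f^\downarrow(\bar{x}\,|\,\cdot)$ to be a nonnegative p.h.\ function on all of $X$, after which rather rigid structural properties of superlinear and sublinear functions bounded below by zero finish the argument.

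First I would verify the nonnegativity of the lower directional derivative. Since $\bar{x}$ is a local minimizer, there is a neighbourhood $U$ of $\bar{x}$ on which $f(x) \geq f(\bar{x})$; for any $d \in X$ and all sufficiently small $t > 0$ the point $\bar{x} + td$ lies in $U$, whence $(f(\bar{x}+td) - f(\bar{x}))/t \geq 0$. Passing to the $\liminf$ as $t \to 0+$ yields $f^\downarrow(\bar{x}\,|\,d) \geq 0$ for every $d \in X$. Combined with the assumed boundedness on $B$ and positive homogeneity, this makes $f^\downarrow(\bar{x}\,|\,\cdot)$ a finite-valued p.h.\ function taking values in $[0,+\infty)$.

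For the first conclusion I would then show that the null functional $0_{X^*}$ is itself a maximal continuous superlinear minorant of $f^\downarrow(\bar{x}\,|\,\cdot)$. As a continuous linear functional, $0_{X^*}$ is certainly continuous and superlinear, and the inequality $0_{X^*}(d) = 0 \leq f^\downarrow(\bar{x}\,|\,d)$ shows it is a minorant. The \emph{delicate} step is maximality: if $h$ is any superlinear minorant of $f^\downarrow(\bar{x}\,|\,\cdot)$ with $h \geq 0_{X^*}$ pointwise, then $h(0) = 0$ by positive homogeneity, and concavity gives $h(x) + h(-x) \leq 2h(0) = 0$ for every $x \in X$. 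Together with $h(x), h(-x) \geq 0$, this forces $h(x) = h(-x) = 0$ for all $x$, so $h \equiv 0_{X^*}$. Hence $0_{X^*} \in S^-_{C}(f^\downarrow(\bar{x}\,|\,\cdot)) = \underline{\partial}^-_{DR}f(\bar{x})$.

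The second conclusion is then immediate: every $g \in \underline{\partial}^+_{DR}f(\bar{x}) = S^+_{C}(f^\downarrow(\bar{x}\,|\,\cdot))$ is a continuous sublinear majorant of $f^\downarrow(\bar{x}\,|\,\cdot)$, so $g(d) \geq f^\downarrow(\bar{x}\,|\,d) \geq 0 = 0_{X^*}(d)$ for all $d \in X$, which is precisely the defining condition for $0_{X^*} \in \partial g$ in the Fenchel--Moreau sense. The only step requiring genuine thought is the maximality argument in the preceding paragraph; the remaining assertions follow directly from the definitions and elementary properties of p.h.\ functions.
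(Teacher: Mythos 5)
Your proposal is correct and follows essentially the same route as the paper: derive $f^\downarrow(\bar{x}\,|\,d)\ge 0$ from local minimality and then read off both conclusions from the definitions of $S^-_C$ and $S^+_C$. The paper's own proof is just more terse — it treats the maximality of $0_{X^*}$ among superlinear (concave) minorants as immediate, whereas you spell out the concavity argument $h(x)+h(-x)\le 2h(0)=0$; that added detail is a faithful justification of the step the paper leaves implicit, not a different method.
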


\proof If  $\bar{x}$ is a local minimizer of $f$ over $X$, then $0 \le f^\downarrow (\bar{x}\,|\,d)$ for all $d \in X$ and, consequently, $0_{X^*} \in \underline{\partial}^-_{DR}f(\bar{x}).$ Besides, since for each $g \in  \underline{\partial}^+_{DR}f(\bar{x})$ the inequality $0 \le f^\downarrow (\bar{x}\,|\,d) \le g(d)\,\,\forall d \in X$ holds, we deduce that $0_{X^*} \in \partial g$ for all $g \in \underline{\partial}^+_{DR}f(\bar{x}).$

\hfill $\Box$

A function $f:X \to \overline{\mathbb R}$ is said \cite{BonSha} to be \textit{Fr\'{e}chet directionally differentiable at a point} $\bar{x} \in {\rm int}({\rm dom}f)$ if it is directionally differentiable at $\bar{x},$ its radial directional derivative $f'(\bar{x}\,|\,\cdot): X \to {\mathbb R}$ is continuous on $X,$ and the equality
\begin{equation}\label{e5.7}
\lim\limits_{z \to 0}\frac{f(\bar{x}+z) - f(\bar{x}) - f'(\bar{x}\,|\,z)}{\|z\|} =0
\end{equation}
holds.

\begin{theorem}\label{th5.2}
Assume that a function $f:X \to \overline{\mathbb R}$ is Fr\'{e}chet directionally differentiable at a point $\bar{x} \in {\rm int}({\rm dom}f).$  If there exists a real $\gamma > 0$ such that $\gamma B^* \subset \partial g$ for all $g \in {\partial}^+_{DR}f(\bar{x}),$ then the point $\bar{x}$ is a (srtict) local minimizer of $f$ over $X.$

Here $B^*$ is the unit ball in the space of continuous linear functionals defined  on $X,$ $\partial g$ is the subdifferential of a sublinear function $g$ in the sense of convex analysis.
\end{theorem}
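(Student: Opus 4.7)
The plan is to combine three ingredients: the lower envelope representation of $f'(\bar{x}\,|\,\cdot)$ by elements of $\partial^+_{DR}f(\bar{x})$ coming from Theorem~\ref{th4.7a}, the fact that each $g \in \partial^+_{DR}f(\bar{x})$ is the support function of its convex-analysis subdifferential $\partial g$, and the Fr\'{e}chet first-order estimate \eqref{e5.7}.

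First I would extract a uniform linear lower bound for $f'(\bar{x}\,|\,\cdot)$. Since $f$ is Fr\'{e}chet directionally differentiable at $\bar{x}$, the derivative $f'(\bar{x}\,|\,\cdot)$ is a continuous (hence upper semicontinuous) p.h.\ function on $X$. Theorem~\ref{th4.7a}$(i)$ therefore applies and yields
\begin{equation*}
f'(\bar{x}\,|\,d) = \inf_{g \in S^+_C(f'(\bar{x}\,|\,\cdot))} g(d) = \inf_{g \in \partial^+_{DR}f(\bar{x})} g(d) \quad \text{for all } d \in X.
\end{equation*}
For any continuous sublinear $g: X\to{\mathbb R}$ one has $g(d) = \sup_{x^*\in\partial g}x^*(d)$, and the hypothesis $\gamma B^*\subset \partial g$ therefore forces $g(d) \ge \sup_{x^*\in \gamma B^*}x^*(d) = \gamma\|d\|$ for every $d\in X$. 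Taking the infimum over $g\in \partial^+_{DR}f(\bar{x})$ I obtain the key estimate
\begin{equation*}
f'(\bar{x}\,|\,d) \ge \gamma\|d\| \quad \text{for all } d\in X.
\end{equation*}

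Next I would combine this with Fr\'{e}chet differentiability. Fix any $\varepsilon \in (0,\gamma)$, say $\varepsilon=\gamma/2$. By \eqref{e5.7} there exists $\delta > 0$ such that for every $z \in X$ with $0 < \|z\| \le \delta$,
\begin{equation*}
f(\bar{x}+z) - f(\bar{x}) - f'(\bar{x}\,|\,z) \ge -\varepsilon\|z\|.
\end{equation*}
Using the linear lower bound on $f'(\bar{x}\,|\,z)$ this gives
\begin{equation*}
f(\bar{x}+z) - f(\bar{x}) \ge \gamma\|z\| - \varepsilon\|z\| = \tfrac{\gamma}{2}\|z\| > 0,
\end{equation*}
which shows that $\bar{x}$ is a strict local minimizer of $f$.

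The only non-routine step is the passage from ``$\gamma B^*\subset \partial g$ for every $g\in\partial^+_{DR}f(\bar{x})$'' to the pointwise inequality $f'(\bar{x}\,|\,d)\ge\gamma\|d\|$; this is where Theorem~\ref{th4.7a} is essential, because it is what lets us convert the uniform subdifferential information carried by every minimal sublinear majorant of $f'(\bar{x}\,|\,\cdot)$ into an estimate on $f'(\bar{x}\,|\,\cdot)$ itself. Once this estimate is in hand, the Fr\'{e}chet remainder condition is strong enough to absorb any first-order error term strictly smaller than $\gamma\|z\|$, and strict local minimality follows at once.
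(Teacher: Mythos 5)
Your proposal is correct and follows essentially the same route as the paper's own proof: invoke Theorem~\ref{th4.7a} to write $f'(\bar{x}\,|\,\cdot)$ as the lower envelope of $\partial^+_{DR}f(\bar{x})$, deduce $g(d)\ge\gamma\|d\|$ from $\gamma B^*\subset\partial g$, conclude $f'(\bar{x}\,|\,d)\ge\gamma\|d\|$, and finish with the Fr\'{e}chet estimate \eqref{e5.7}. The only difference is that you spell out the final $\varepsilon=\gamma/2$ argument that the paper leaves implicit, which is a harmless (and welcome) elaboration.
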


\proof Since $f'(\bar{x}\,|\,\cdot)$ is continuous, then, by Theorem \ref{th4.7a}, we can present it in the form $f'(\bar{x}\,|\,d)= \inf\limits_{g \in {\partial}^+_{DR}f(\bar{x})}g(d)$ for all $d \in X.$  From the inclusion $\gamma B^* \subset \partial g$ holding for each $g \in {\partial}^+_{DR}f(\bar{x}),$ we get that $g(d) \ge \gamma \|d\|$ for all $d \in X$ and all $g \in {\partial}^+_{DR}f(\bar{x}).$ Consequently, $f'(\bar{x}\,|\,d) \ge \gamma \|d\|$ for all $d \in X.$ Due to the equality \eqref{e5.7} the last condition is sufficient for the point $\bar{x}$ to be a (strict) minimizer of $f$ over $X.$ \hfill $\Box$

\section{Conclusion}

The mapping $\Sigma^-_{Lip}: f \rightarrow \Sigma^-_{Lip}(f)$ introduced in the paper assigns to each real-valued function $f:X \to {\mathbb R}$ ($X$ is a  normed space) the uniquely determined (possibly empty) family $\Sigma^-_{Lip}(f)$ of maximal concave minorants of $f$ that are Lipschitz continuous on $X.$ By Theorem \ref{th4.4} the effective domain of the mapping $\Sigma^-_{Lip}$ is the collection of functions that are Lipschitz bounded from below on~$X.$ Moreover, if $\Sigma^-_{Lip}(f) \ne \varnothing$, then the upper envelope of $\Sigma^-_{Lip}(f)$ is just the lower semicontinuous clousure of $f.$ 

 As it was already observed in Section \ref{sec3} (see Example \ref{ex4.2}), the mapping $\Sigma^-_{Lip}: f \rightarrow \Sigma^-_{Lip}(f)$ can be considered as an extension of the Legendre-Fenchel transformation $f \to f^*$ from the class of lower semicontinuous convex functions to the collections of lower semicontinuous and Lipschitz bounded from below functions. In particular, the restriction of $\Sigma^-_{Lip}$ to the class of lower semicontinuous  p.h. functions is the extension of the classical Minkowski duality assigning to each continuous sublinear functions the Fenchel-Moreau subdifferential at the origin. These observations  gives hope that the results presented in this paper can serve as a basis for the development of a duality theory for functions that are much more complicated than convex ones.

 The families $\Sigma^-_{Lip}(f)$ and $\Sigma^+_{Lip}(f)$ are global characteristics of functions. The Demyanov-Rubinov subdifferential $\partial^-_{DR}f(x)$ and the Demyanov-Rubinov superdifferential $\partial^+_{DR}f(x)$ at a point $x$ introduced in the paper are their localizations based on directional derivatives of a function $f.$ At the same time, another approach to subdifferentiability, based on local approximations of  functions directly by subdifferential constructions rather than  on primary approximations by  directional derivatives, also are widely used in nonsmooth analysis. In particular, the Fr\'{e}chet subdifferential \cite{Kruger}, the limiting Kruger-Mordukhovich subdifferential \cite{Mord1}, and some others are introduced following this approach. The realization of the latter approach to localization of the characteristics $\Sigma^-_{Lip}(f)$ and $\Sigma^+_{Lip}(f)$ also is a perspective direction for further research.

\section*{Disclosure statement}

There are no conflicts of interest to disclose.

\section*{Funding}

The research was supported by the Belarussian State Research Programm under Grant `Conversion -- 1.04.01'.


\begin{thebibliography}{99}

\bibitem{Bourb}
Bourbaki~N. General Topology: Chapter 5-10. Berlin: Springer; 1989.

\bibitem{ET76}
Ekeland~I, Temam~R. Convex Analysis and Variational Problems,
Amsterdam: North-Holland, 1976. 

\bibitem{KutRub72}
Kutateladze~ SS, Rubinov~AM. Minkowski duality and its applications. Russian Math Surveys. 1972;27(3):137--192.

\bibitem{KutRub}
Kutateladze~ SS, Rubinov~AM. Minkowski duality and its applications. Novosibirsk: Nauka, 1976. Russian. -- 

\bibitem{GRC}
Glover~BM, Rubinov~AM, Craven~BD. Solvability theorems involving inf-convex functions. J Math Anal Appl. 1995;191:305--330.


\bibitem{Rub99}
Rubinov~AM. Supremal generators of spaces of homogeneous functions, In: Eberhard~A, Glover~B, Ralph~D, editors. Progress in Optimization: Contribution from Australasia, Dordrecht: Kluwer Academic Publishers. 1999. p. 91--100.

\bibitem{Rub00}
Rubinov~AM. Abstract convexity and global optimization. Dordrecht: Kluwer Academic Publishers, 2000.

\bibitem{PalRol}
Pallaschke~D, Rolewicz~S. Foundations of Mathematical Optimization (Convex analysis without linearity). Dordrecht: Kluwer Academic Publishers. 1997.

\bibitem{Singer}
Singer~I. Abstract Convex Analysis. New York (NY): Wiley-Interscience
Publication. 1997.

\bibitem{DR82}
Demyanov~VF, Rubinov~AM. [Elements of
quasidifferential calculus]. In: Demyanov~VF, Rubinov~AM, editors. Nonsmooth problems of optimization theory and control. Leningrad: Leningrad University Press, 1982. p. 5--127. Russian.

\bibitem{DR90}
Demyanov~VF, Rubinov~AM. Osnovy negladkogo analiza i kvazidifferentsial'noe ischislenie
[Foundations of nonsmooth analysis and quasidifferential calculus]. Moscow: Nauka Publ.; 1990. Russian.

\bibitem{DR95}
Demyanov~VF, Rubinov~AM.  Constructive Nonsmooth Analysis. Frankfurt: Verlag Peter Lang; 1995.

\bibitem{Ud2000}
Uderzo~A. Convex approximators, convexificators and exhausters: applications to constrained extremum problems. In: Demyanov~VF, Rubinov~AM, editors.
Quasidifferentiability and Related Topics. Dordrecht: Kluwer
Academic Publishers. 2000. p. 297--327.

\bibitem{Gor2017a}
Gorokhovik~VV. On the representation of upper semicontinuous functions defined on infinite-dimensional
normed spaces as lower envelopes of families of convex functions. Trudy Inst. Mat. Mekh. UrO RAN. 2017;23(1):88-102.


\bibitem{GS2011}
Gorokhovik~VV, Starovoitava~MA. Characteristic properties of primal exhausters for various classes of positively homogeneous
functions. Proceedings of the Institute of Mathematics. National Academy of Sciences of Belarus. 2011;19(2):12--25. Russian.

\bibitem{GorTr2016}
{Gorokhovik~VV, Trafimovich~MA}
Positively homogeneous functions revisited. J Optimiz Theory Appl. 2016;171(2):481-503.


\bibitem{Dem99}
Demyanov~VF. Exhausters of a positively homogeneous function.  Optimization. 1999;45:13--29.

\bibitem{DR2000}
Demyanov~VF.  Exhausters and convexificators --- new tools in nonsmooth analysis. In: Demyanov~VF, Rubinov~AM, editors.
Quasidifferentiability and Related Topics. Dordrecht: Kluwer
Academic Publishers. 2000. p. 85--137.

\bibitem{Rock}
Rockafellar~RT. Convex analysis. Princeton (NJ): Princeton University Press; 1970.

\bibitem{HULem1}
Hiriart-Urruty ~B, Lemarechal~C. Convex Analysis and Minimization Algorithms I. Fundamentals. Berlin: Springer; 1993. 

\bibitem{HULem2}	
Hiriart-Urruty~JB, Lemarechal~C. Convex Analysis and Minimization Algorithms II. Advanced Theory and Bundle Methods.  Berlin: Springer; 1993. 

\bibitem{Penot1}
Penot, J.-P. Calcul Sous-Differential et Optimization. J Functional Anal. 1978;2:248--276.

\bibitem{Penot2}
Penot~J.-P. Calculus without Derivatives. New York (NY): Springer; 2013. 

\bibitem {Ioffe17}
Ioffe~AD. Variational Analysis of Regular Mappings. Theory and Applications. Springer International Publishing AG, 2017. 

\bibitem{BonSha}
Bonnans~JF, Shapiro~A. Perturbation Analysis of Optimization Problems.
Berlin: Springer; 2000. 

\bibitem{Gor2017b}
Gorokhovik~VV. Demyanov-Rubinov subdifferentials of real-valued functions. In: Polyakova~LN, editor. 2017 Constructive Nonsmooth Analysis and Related Topics (Dedicated to the memory of V.F. Demyanov) (CNSA), Russia, Saint-Petersburg, May 22 – 27, 2017. Proceedings.  Institute of Electrical and Electronics Engineers (IEEE), 2017.  p. 122–125. DOI: 10.1109/cnsa.2017.7973962

\bibitem{Kelley}
Kelley~JL. General topology. New York (NY): Springer-Verlag, 1975.

\bibitem{Valent}
{Valentine~AF} Convex Sets. New York (NY): McGraw-Hill Book Company; 1964. 

\bibitem{Smith}
{Smith~CR.} A characterization of star-shaped sets. American Math.
Monthly. 1968;75:386.

\bibitem{GorZor}
{Gorokhovik~VV, Zorko~OI.} Piecewise affine functions and polyhedral sets. Optimization. 1994;31:209--221.

\bibitem{Cast1}
Castellani M. A dual representation for proper positively homogeneous functions. J Global Optim. 2000;16(4):393--400.


\bibitem{Cast2}
Castellani M. Dual representations of positively homogeneous functions. In: Demyanov~VF, Rubinov~AM, editors.
Quasidifferentiability and Related Topics. Dordrecht: Kluwer
Academic Publishers. 2000. p. 73--84.

\bibitem{Bauschke}
Bauschke~HH, Combetters~PL. Convex Analysis and Monotone Operator Theory in Hilbert Spaces. New York (NY): Springer; 2011.

\bibitem{RW98}
Rockafellar~RT, Wets~RJ-B. Variational analysis. Berlin: Springer; 1998.

\bibitem{HU}
Hiriart-Urruty~JB. Extension of Lipschitz functions. J Math Anal and Appl. 1980 77:539--554.

\bibitem{GT2015}
Gorokhovik~VV, Trafimovich~MA. On methods for converting exhausters of positively homogeneous functions. Optimization. 2016;65(3):589--608.

\bibitem{BorVan}
Borwein~JM, Vanderwerff~JD. Convex Functions: Constructions, Characterizations and Counterexamples. Cambridge: Cambridge University Press; 2010.

\bibitem{Mord1}
Mordukhovich~BS. Variational Analysis and Generalized Differentiation. I: Basic Theory. Berlin: Springer; 2005.

\bibitem{Kruger}
Kruger~AY. On Fr\'{e}chet subdifferentials. J Math. Sci. 2003;116(3):3325--3358.

\end{thebibliography}
\end{document}